\documentclass[11pt,a4paper]{amsart}


\usepackage{amssymb}
\usepackage{amsmath}
\usepackage{amsthm}
\usepackage[mathscr]{euscript}
\usepackage{verbatim}   
\usepackage{graphicx}   
\usepackage{enumitem}
\usepackage{color}     

\theoremstyle{plain}
\numberwithin{equation}{section}

\newtheorem{theorem}{Theorem}[section]
\newtheorem{lemma}[theorem]{Lemma}
\newtheorem{proposition}[theorem]{Proposition}
\newtheorem{corollary}[theorem]{Corollary}

\newtheorem{maintheorem}{Main Theorem}

\theoremstyle{definition}

\newtheorem{definition}[theorem]{Definition}

\newcommand\C{\mathbb{C}}

\newcommand\Z{\mathbb{Z}}

\newcommand\D{\mathbb{D}}

\newcommand{\N}{{\mathcal N}}
\newcommand{\Newt}{{\textrm{\bf Newt}}}
\newcommand{\AENG}{{\textrm{\bf NGraph}}}

    \usepackage{enumitem}

\theoremstyle{remark}
\newtheorem{remark}[theorem]{Remark}

\def\theta{\vartheta}

\def\S{\mathbb{S}}

\def\D{\mathbb{D}}

\def\ol{\overline}


\begin{document}

\title{A classification of postcritically finite Newton maps}

\begin{abstract}
The dynamical classification of rational maps is a central concern of holomorphic dynamics. Much progress has been made, especially on the classification of polynomials and some approachable one-parameter families of rational maps; the goal of finding a classification of general rational maps is so far elusive.  Newton maps (rational maps that arise when applying Newton's method to a polynomial) form a most natural family to be studied from the dynamical perspective.  Using Thurston's characterization and rigidity theorem, a complete combinatorial classification of postcritically finite Newton maps is given in terms of a finite connected graph satisfying certain explicit conditions.
\end{abstract}

\begin{author}[Lodge]{Russell Lodge}
\email{russell.lodge@indstate.edu}
\address{Department of Mathematics and Computer Science, Indiana State University, 200 North Seventh Street, Terre Haute, IN 47809, USA}
\end{author}

\begin{author}[Mikulich]{Yauhen Mikulich}
\email{y.mikulich@gmail.com}
\address{All\'ee Leotherius 2, 1196 Gland, Switzerland}
\end{author}

\author[Schleicher]{Dierk Schleicher}
\email{dierk.schleicher@univ-amu.fr}
\address{Aix--Marseille Universit\'e and CNRS, Institut de Math\'ematiques de Marseille, UMR 7373, 163 Avenue de Luminy, 13009 Marseille, France}
\keywords{Newton map; rational map; parameter space; renormalization; Hubbard tree; combinatorial classification; extended Newton graph; Thurston's theorem.}

\thanks{This research was partially supported by the Deutsche Forschungsgemeinschaft (DFG), as well as the advanced grant 695621 HOLOGRAM of the European Research Council (ERC), which is gratefully acknowledged.  The authors are most grateful to the anonymous referee for very helpful comments that have led to marked improvements.}

\subjclass[2010]{Primary 30D05, 37F10, 37F20}

\date{\today}

\maketitle
\tableofcontents

\section{Introduction}
\index{Newton map}\index{classification!of postcritically finite Newton maps}
The past four decades have seen tremendous progress in the understanding of holomorphic dynamics.  This is largely due to the fact that the complex structure provides enough rigidity, allowing many interesting questions to be reduced to tractable combinatorial problems.

To understand the dynamics of rational maps, an important first step is to understand the dynamics of postcritically finite maps\index{postcritically finite map}\index{postcritically finite}, namely the maps where each critical point has finite forward orbit.  Thurston's ``Fundamental Theorem of Complex Dynamics'' \cite{DH93} is available in this setting, providing an important characterization and rigidity theorem for postcritically finite branched covers that arise from rational maps.  Also, the postcritically finite maps are the structurally important ones, and conjecturally, the set of maps that are quasiconformally equivalent (in a neighborhood of the Julia set) to such maps are dense in parameter spaces \cite[Conjecture 1.1]{MC}.  

Polynomials form an important and well-understood class of rational functions.  In this case, the point at infinity is completely invariant, and is contained in a completely invariant Fatou component.  This permits enough dynamical structure so that postcritically finite polynomials may be described in finite terms, e.g.,\ using external angles at critical values or finite Hubbard trees.  A complete classification of postcritically finite polynomials has been given \cite{BFH,Poirier}\index{postcritically finite polynomial}\index{postcritically finite}\index{classification!of postcritically finite polynomials}.

Classification results for families of rational functions are rare and mostly concern one-dimensional families. There is a recent classification of critically fixed rational maps \cite{CGNPP,Hl19} and critically fixed anti-rational maps \cite{Gey,LLM} in arbitrary degree, but these classification results do not address higher period critical points.   The noteworthy family that exceeds all these limitations is the family classified here: Newton maps.  

\begin{definition}[Newton map]
A rational function $f:\widehat{\C}\to\widehat{\C}$ is called a \emph{Newton map} if there is some complex polynomial $p(z)$ so that $f(z)=z-\frac{p(z)}{p'(z)}$ for all $z\in\mathbb{C}$. \index{Newton map}
\end{definition}

Denote such a Newton map by $N_p$.  Newton maps of degree 1 and 2 are trivial and thus excluded from our entire discussion.  

Note that $N_p$ is precisely the function that is iterated when Newton's method is used to find the roots of the polynomial $p$.  Each root of $p$ is an attracting fixed point of the Newton map, and the point at infinity is a repelling fixed point.  The algebraic number of roots of $p$ is the degree of $p$, while the geometric number of roots of $p$ (ignoring multiplicities) equals the degree of $N_p$.  The space of degree $d$ Newton maps considered up to affine conjugacy has $d-2$ degrees of freedom, given by the location of the $d$ roots of $p$ after affine conjugation.  The space of degree $d$ complex polynomials up to affine conjugacy has $d-1$ degrees of freedom, given by the $d+1$ coefficients after affine conjugation. Thus it is clear that Newton maps form a substantial subclass of rational maps, making the combinatorial classification all the more remarkable. A brief summary of ``extended Newton graphs'' is given below in the introduction, where it should be noted that arbitrary choices must be made in the construction of so-called ``Newton rays'' with the result that there is no unique extended Newton graph associated with a Newton map.  The abstract graph definition and combinatorial equivalence are given in Definitions \ref{Def_AbstractExtNewtGraph} and \ref{Def_ThurstonEquivalenceAbstExtGraphs}).   

\begin{maintheorem}[Classification of postcritically finite Newton maps]
\label{Thm_Bijection}\index{classification!of postcritically finite Newton maps}\index{postcritically finite Newton map}\index{Newton map!postcritically finite}\index{Newton map!classification} There is a natural bijection between the set of postcritically finite Newton maps (up to affine conjugacy) and the set of abstract extended Newton graphs (up to combinatorial equivalence)  so that for every abstract extended Newton graph $(\Sigma, f)$, the associated postcritically finite Newton map has the property that any associated extended Newton graph is equivalent to $(\Sigma,f)$.
\end{maintheorem}

In the study of Newton maps, an important first theorem is the following characterization in terms of fixed point multipliers.

\begin{proposition}[Head's theorem]\cite{He}
\label{Prop:Head'sTheorem}\index{Head's theorem}
A rational map $f$ of degree $d\geq 3$ is a Newton map if and only if for each fixed point $\xi\in\C$, there is an integer $m\geq 1$ so that $f'(\xi)=(m-1)/m$.
\end{proposition}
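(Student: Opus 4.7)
The plan is to handle the two implications separately, with the converse being the one that requires actual work.

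For the forward direction, assume $f = N_p$. The finite fixed points of $f$ are exactly the roots of $p$. Given a root $\xi$ of multiplicity $m$, write $p(z) = (z-\xi)^m q(z)$ with $q(\xi)\ne 0$ and compute $f'(z) = p(z)p''(z)/p'(z)^2$; substituting the factorization and taking $z\to\xi$ yields $f'(\xi) = (m-1)/m$. This is a one-line verification once the formula for $f'$ has been recorded.

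For the converse, the guiding idea is that if $f=N_p$ then $z-f(z) = p(z)/p'(z)$, so the function
\[
    h(z) \;:=\; \frac{1}{z - f(z)}
\]
should equal the logarithmic derivative $p'/p$. The strategy is therefore to show, under the multiplier hypothesis alone, that $h$ is a rational function all of whose poles are simple with positive integer residues and which vanishes at $\infty$; the partial fraction expansion will then exhibit $h$ as $p'/p$ for $p(z) := \prod_i (z-\xi_i)^{m_i}$, and $f = z - 1/h = N_p$ follows immediately. At any finite fixed point $\xi$ with $f'(\xi) = (m-1)/m$ we have $(z-f(z))'|_\xi = 1/m \ne 0$, so $z-f(z)$ has a simple zero and $h$ a simple pole with residue $\op{Res}_\xi h = 1/(1-f'(\xi)) = m \in \Z_{>0}$, as required.

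The content of the argument is therefore concentrated at $\infty$: one must show that $\infty$ is a fixed point of $f$ with multiplier $\lambda \ne 1$, and that $h(\infty)=0$. I would use the holomorphic/rational Lefschetz fixed point formula $\sum_{\xi} 1/(1-f'(\xi)) = 1$, summed over all fixed points in $\Cc$. If $\infty$ were not fixed, the left-hand side would equal $\sum_i m_i$, and since there would then be $d+1\ge 4$ finite simple fixed points (simple because $f'(\xi)\ne 1$) each contributing at least $1$, the sum would be $\ge d+1\ge 4$, contradicting its value $1$. Hence $\infty$ is fixed; counting the fixed-point equation's multiplicities shows it is simple, so $\lambda\ne 1$. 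In the chart $w=1/z$ this gives $f(z) = z/\lambda + O(1)$, whence $z - f(z) = (1-1/\lambda)z + O(1) \to \infty$ and $h(\infty)=0$. Solving the Lefschetz equation also pins down $\lambda = s/(s-1)$ with $s=\sum m_i$, which is a pleasant sanity check since $s$ will be $\deg p$.

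The main obstacle is exactly this step at $\infty$: the hypothesis places no condition on the fixed point at $\infty$, so both its existence and the non-resonance of its multiplier must be extracted globally from the multiplier data at the \emph{finite} fixed points, for which the holomorphic fixed point formula is the natural tool. Once that is done, the remaining assembly — writing $h$ as a sum of simple fractions with positive integer residues and reading off $p$ — is formal.
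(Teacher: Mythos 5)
The paper offers no proof of this proposition---it is quoted from \cite{He}---so your argument has to stand on its own. The forward direction is correct, and your strategy for the converse (show $h=1/(z-f)$ has only simple poles with positive integer residues $m_i$ and vanishes at $\infty$, then read off $h=p'/p$) is the standard one. You are also right that the holomorphic fixed point formula rules out the possibility that all $d+1$ fixed points are finite, so $\infty$ is indeed a fixed point.

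The gap is the sentence ``counting the fixed-point equation's multiplicities shows it is simple.'' The count only gives that $\infty$ has multiplicity $d+1-k$, where $k$ is the number of finite fixed points, and nothing in the hypothesis forces $k=d$. If $k<d$, then $\infty$ is a multiple, hence parabolic, fixed point with multiplier $\lambda=1$: writing $F(w)=w+aw^{n+1}+\cdots$ in the chart $w=1/z$ gives $f(z)=z-az^{1-n}+\cdots$, so $z-f(z)$ tends to $a$ or to $0$ rather than to $\infty$, $h(\infty)\neq 0$, and the identification $h=p'/p$ collapses. This cannot be extracted from the multiplier data at the finite fixed points as you hope: the general (index) form of the fixed point formula only yields $\iota(f,\infty)=1-\sum_i m_i$, and the index of a parabolic point is an unconstrained quantity, so no contradiction results. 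In fact the statement as printed fails in exactly this edge case: $f(z)=z+z^{-2}$ has degree $3$, no fixed points in $\C$ (so the multiplier condition holds vacuously), $\infty$ as a parabolic fixed point of multiplicity $4$, and is not a Newton map. The standard formulation of Head's theorem carries the additional hypothesis that $\infty$ is a repelling (in particular simple) fixed point of $f$; with that hypothesis added, $\lambda\neq 1$, $h(\infty)=0$, and your argument closes up exactly as you describe.
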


This condition on multipliers forces $\infty$ to be a repelling fixed point by the holomorphic fixed point formula. In fact, for a postcritically finite Newton map all the finite fixed points must be superattracting (otherwise, the immediate basin contains a critical point that converges to the root), and this corresponds to the roots of $p$ being simple. 

There are a number of partial classification theorems for postcritically finite Newton maps.  Tan Lei has given a classification of cubic Newton maps in terms of matings and captures\index{classification!of cubic Newton maps} (or alternatively in terms of abstract graphs \cite{TL}; see also earlier work by Head \cite{He}).  Luo produced a similar combinatorial classification for Newton maps of arbitrary degree subject to the condition that there is only a single non-fixed critical value, and this critical value is either periodic or eventually maps to a fixed critical point \cite{Lu1}. 

The classification of postcritically \emph{fixed} Newton maps\index{postcritically fixed Newton map}\index{Newton map!postcritically fixed}\index{classification!of postcritically fixed Newton maps} for arbitrary degree (those Newton maps whose critical points eventually land on fixed points) is given in \cite{DMRS} building on the work of \cite{RueckertThesis}.  The fundamental piece of combinatorial data is the \emph{channel diagram} $\Delta$\index{channel diagram} which is constructed in \cite{HSS}.  This is a graph in the Riemann sphere whose vertices are given by the fixed points of the Newton map and whose edges are given by all accesses of the immediate basins of roots connecting the roots to $\infty$ (see the solid lines of Figure \ref{Fig_Deg6NewtonGraph}).  To capture the behavior of non-periodic critical points that eventually map to the channel diagram, it is natural to consider\footnote{We denote the $n$-th iterate of a dynamical system $f:X\to X$ by $f^n:X\to X$.} the graph $N_p^{-n}(\Delta)$ for some integer $n$.  However this graph is not necessarily connected (see Figure \ref{Fig_Deg6NewtonGraph} for an example), and so the \emph{Newton graph of level $n$}\index{Newton graph} associated with $N_p$ is defined to be the component of  $N_p^{-n}(\Delta)$ that contains $\Delta$. It is shown in \cite{DMRS}  that for any postcritically \emph{finite} Newton map $N_p$, there is some level $n$ so that the Newton graph of level $n$ contains all critical points that eventually map to the channel diagram (this fact is non-trivial because preimage components of the channel diagram were discarded).  For minimal $n$ this component is called the \emph{Newton graph} in the context of postcritically fixed maps, and the data consisting of this graph equipped with a graph map inherited from the dynamics of the Newton map is enough to classify postcritically \emph{fixed} Newton maps. 
\index{classification!of postcritically fixed Newton maps}

\begin{figure}[h]
\centerline{\includegraphics[width=135mm]{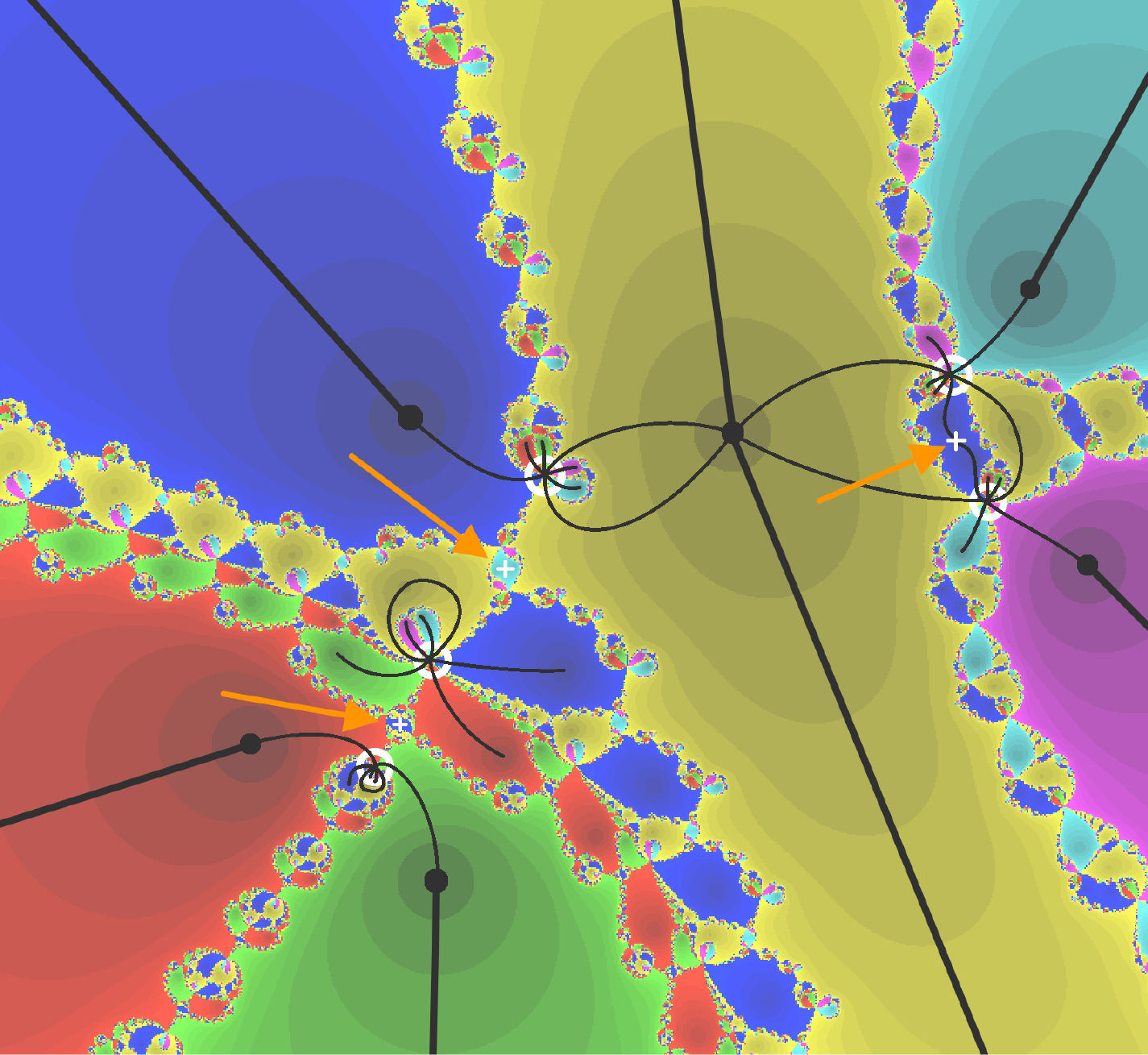}}
\caption{Dynamical plane of a degree 6 Newton map $N_p$. The six roots are indicated by black dots. The five finite poles are indicated by white circles. The channel diagram $\Delta$ is drawn with thick black curves, and $N_p^{-1}(\Delta)\setminus\Delta$ is drawn with thin black curves. The Newton graph\index{Newton graph} of level one $\Delta_1$ is visible as the component of $N_p^{-1}(\Delta)$ that contains $\Delta$. Note that $N_p^{-1}(\Delta)\setminus\Delta_1$ is nonempty and contains one connected component.  There are three non-fixed simple critical points indicated by orange arrows and a white ``+''. The rightmost such critical point is mapped by $N_p$ to the root in the blue basin, but more than one iterate is required to map the other two critical points to a root. There are no free critical points.}
\label{Fig_Deg6NewtonGraph}
\end{figure}

We classify postcritically \emph{finite} Newton maps,\index{postcritically finite Newton map}\index{Newton map!postcritically finite}\index{postcritically finite}\index{classification!of postcritically finite Newton maps} building on work of \cite{Mik}. The chief difficulty in this generalized setting is the existence of critical points whose forward orbit does not contain a fixed point. We thus call a critical point \emph{free}\index{free critical point} if it is not contained in the Newton graph $\Delta_n$ for any level $n$. In \cite{LMS1} a finite graph containing the postcritical set was constructed for a postcritically finite Newton map. The graph is composed of three types of pieces:
\begin{itemize}
\item  the Newton graph\index{Newton graph} (which contains the channel diagram\index{channel diagram}) is used to capture the behavior of critical points that are eventually fixed.  
\item Hubbard trees\index{Hubbard tree} are used to give combinatorial descriptions of renormalizations at periodic non-fixed postcritical points. See \cite{DLSS} for the construction of the renormalization.  Preimages of the Hubbard trees are taken to capture the behavior of critical points whose orbits intersect the Hubbard trees (or equivalently the free critical points).
\item Newton rays\index{Newton ray} (single edges comprised of either an internal ray or a sequence of infinitely many preimages of channel diagram edges) are used to connect all Hubbard trees and their preimages to the Newton graph.  
\end{itemize}

The construction of these three types of edges is given in \cite{LMS1} and not reproduced here, but an example is provided in Figures \ref{Pic_CubicNewtonRay} and \ref{CubicNewtonRayMagn}.

\begin{figure}[h!]
\centering
\includegraphics[width=12cm]{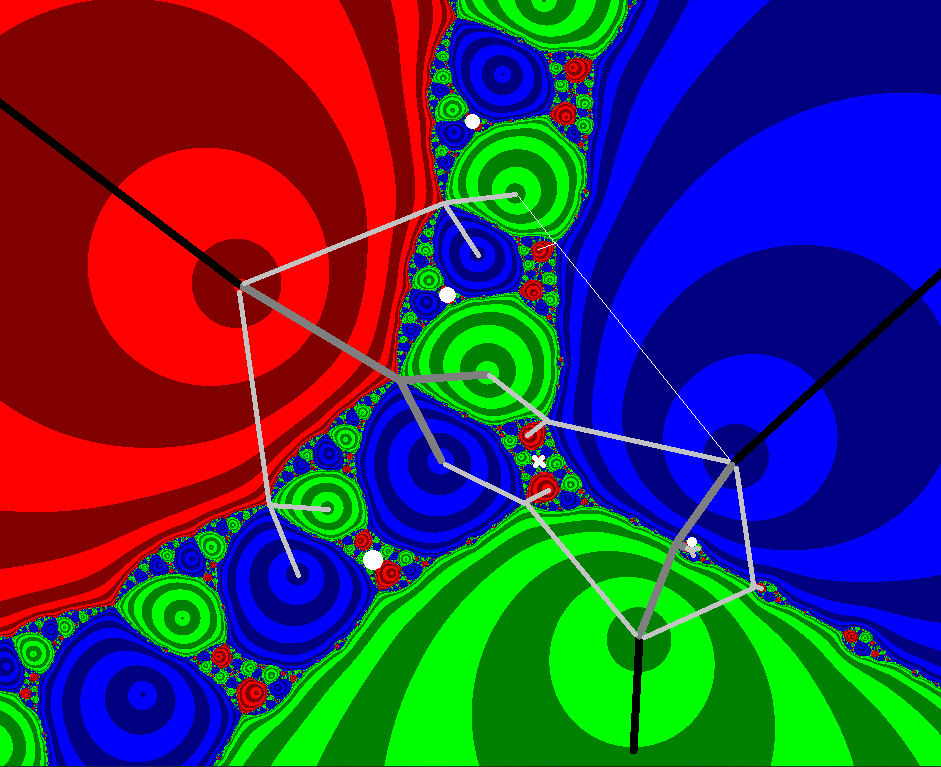}
\caption{\label{Pic_CubicNewtonRay} The dynamical plane of a cubic Newton map $N_p$ displaying part of the extended Newton graph.  The centers of the biggest red, green, and blue basins are fixed critical points. The white ``\textsf{x}'' indicates a free critical point, and its orbit is indicated by white dots. It has period 5 and the corresponding polynomial-like map straightens to $z\mapsto z^2$ (which is visible in Figure \ref{CubicNewtonRayMagn}).  Thick black edges indicate the channel diagram $\Delta$, and successively lighter edges indicate the additional edges in $\Delta_1$, $\Delta_2$ and $\Delta_3$ (due to the small scale many edges from $\Delta_3$ are omitted, though enough are drawn to separate the filled Julia sets as required by the construction).   The combinatorial invariant for $N_p$ consists of $\Delta_3$, the Hubbard trees of each of the five filled Julia sets, and five Newton rays connecting them to $\Delta_3$. A Hubbard tree/Newton ray pair is exhibited in the zoom of Figure \ref{CubicNewtonRayMagn}.  (Both images produced by Wolf Jung's Mandel software)}
\end{figure}

\begin{figure}[h!]
\centering
\includegraphics[width=12cm]{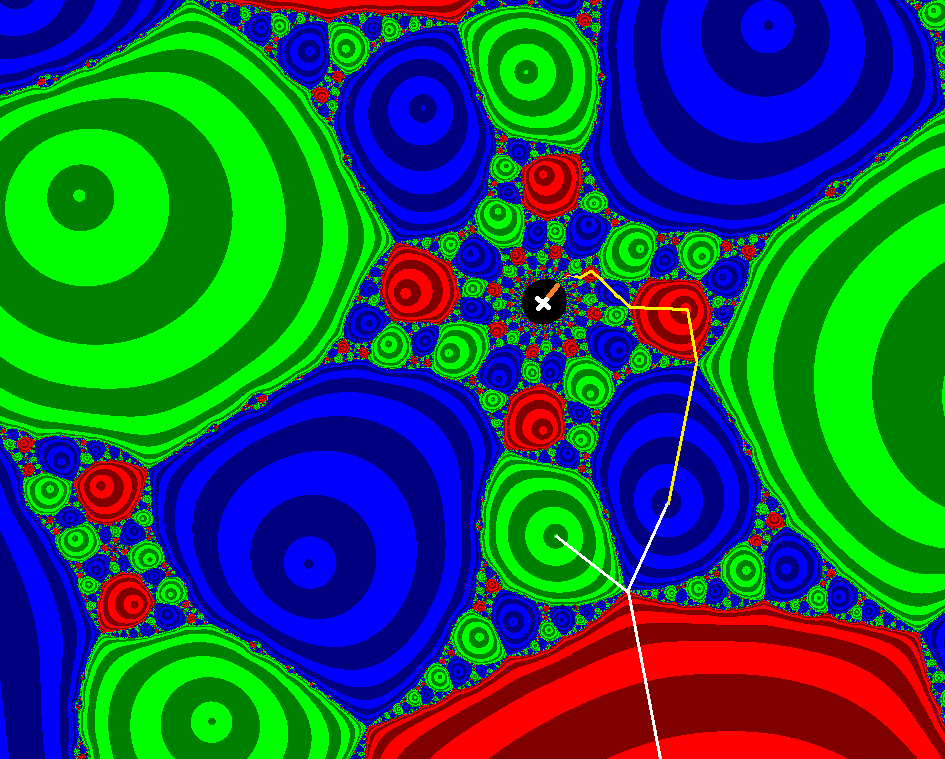}
\caption{\label{CubicNewtonRayMagn} Zoom of Figure \ref{Pic_CubicNewtonRay} at the free critical point. The filled Julia set is visible as a black disk, and its Hubbard tree is drawn in orange.  The white edges are level 3 edges in the Newton graph $\Delta_3$.  The yellow edge is a period 5 Newton ray.}
\end{figure}

The restriction of the Newton map to this ``extended Newton graph''\index{extended Newton graph}\index{Newton graph!extended} yields a graph self-map, and the resulting dynamical graphs are axiomatized (as abstract extended Newton graphs; see Definition \ref{Def_AbstractExtNewtGraph}).
\begin{theorem} [{Newton maps to graphs; \cite[Theorem 1.2]{LMS1}}] \label{Thm_NewtMapsGenerateExtNewtGraphs} 
For any extended Newton graph $\Delta^*_\N\subset\widehat{\mathbb{C}}$ associated with a postcritically finite Newton map $N_p$\index{postcritically finite Newton map}\index{Newton map!postcritically finite}\index{postcritically finite}, the pair $(\Delta^*_\N,N_p)$ satisfies the axioms of an abstract extended Newton graph.\index{abstract extended Newton graph}\index{Newton graph!abstract extended}

\end{theorem}

It must be emphasized that arbitrary choices were made in the construction of the Newton rays, necessitating a rather subtle but natural combinatorial equivalence relation on our way to a classification.  

Our first main result on the way to the classification is that every abstract extended Newton graph is realized by a unique Newton map (up to affine conjugacy); it is proven using Thurston's theorem.  In the following theorem statement, $\ol{f}$ denotes the unique extension (up to Thurston equivalence) of the graph map $f$ to a branched cover of the whole sphere, and the set of vertices of a graph $\Gamma$ is denoted by $\Gamma'$. 

\begin{maintheorem} [Graphs to Newton maps]
\label{Thm_Realization}  Let $(\Sigma,f)$ be an abstract
extended Newton graph (as in Definition \ref{Def_AbstractExtNewtGraph}). Then there is a postcritically finite
Newton map $N_p$, unique up to affine conjugacy, with extended Newton graph $\Delta^*_\N$ so that
the marked branched covers $(\ol{f},\Sigma')$ and $(N_p,(\Delta^*_\N)')$ are
Thurston equivalent.
\end{maintheorem}

Denote by $\Newt$ the set of postcritically finite Newton maps up to affine conjugacy, and by $\AENG$ we denote the set of abstract extended Newton graphs under the graph equivalence of Definition \ref{Def_ThurstonEquivalenceAbstExtGraphs}.  It follows from the statements of Theorem \ref{Thm_NewtMapsGenerateExtNewtGraphs} and \ref{Thm_Realization} that there are well-defined maps 
\[
\mathcal{F}: \Newt \to \AENG
\quad\text{ and }\quad
\mathcal{F}': \AENG \to \Newt
\]
respectively.  It will be shown that the mappings $\mathcal{F}$ and $\mathcal{F}'$ are bijective, and inverses of each other, yielding Main Theorem \ref{Thm_Bijection}.

\begin{remark}
This paper not only provides a classification of the largest non-polynomial family of rational maps so far, it also lays foundations for classification and rigidity results\index{rigidity (of Newton maps)} in a substantially larger context. In particular, there is the following rigidity result: all Newton maps, postcritically finite or not, are \emph{rigid} in the sense that any two such maps can be distinguished in purely combinatorial terms (plus conformal invariants such as multipliers of attracting cycles), except when they admit embedded polynomial-like dynamics that fails to be rigid \cite{DS} (see also \cite{RYZ} in the non-renormalizable case). In parallel, strong results about local connectivity for the Julia sets of Newton maps are developed. In particular, the boundary of every component of the basin of a root is locally connected; this was also shown independently in \cite{WYZ}.

The fundamental property of the dynamics that is underlying our work is that Fatou components have a common accessible boundary point at infinity, as well as the preimages of these Fatou components. A basic ingredient in more general classification and rigidity results builds on periodic Fatou components with common accessible boundary points, and for these our methods will be a key ingredient.

We also mention work of Mamayusupov \cite{Ma18} that establishes a bijection between the set of rational maps that arise as Newton maps of transcendental entire functions and the set of postcritically finite Newton maps of our study. Finally, we should also mention that Newton's method is much better at actually finding roots of complex polynomials than its reputation sometimes predicts; see for instance \cite{SSt,RSS,SCRSSS,S}.

\end{remark}

\medskip

\noindent\textbf{Structure of this paper}: Section \ref{Sec_ThurstonTheory} introduces Thurston's characterization and rigidity theorem for postcritically finite branched covers.  This theorem asserts that a topological branched cover that has no obstructing multicurves is uniquely realized by a rational map (under a mild assumption that is irrelevant for our purposes), and that such multicurves are the only possible obstructions for existence.  Since it is often very hard to show directly that a cover is unobstructed, we describe a theorem of Pilgrim and Tan that is very useful for this purpose,  controlling the location of obstructions.  Section \ref{Sec_ExtendingMapsOnGraphs} presents a result on how to extend certain kinds of graph maps to branched covers on the whole sphere.

Section \ref{Chap_AbstrExtNewtGraph} defines the abstract extended Newton graph, which will be shown to be a complete invariant for postcritically finite Newton maps.  The equivalence on such graphs is defined in Section \ref{Chap_GraphEquivalence}, and the connection between this combinatorial equivalence and Thurston equivalence is described.

Section \ref{NewtonMapsfromGraphs} proves Theorem \ref{Thm_Realization} by showing that abstract extended Newton graphs equipped with their graph self-maps extend to branched covers of the sphere that are unobstructed.  

Section \ref{Sec_ProofThmBijection} proves Theorem \ref{Thm_Bijection}, establishing the combinatorial classification of postcritically finite Newton maps.


\section{Thurston theory on branched covers} \label{Sec_ThurstonTheory} 
We will be using Thurston's theorem to prove that the combinatorial model for postcritically finite Newton maps is realized by a rational map, and we present the requisite background in this section.  As one observes from the statement of Thurston's theorem below, this amounts to showing that the combinatorial model has no obstructing multicurves.  There are infinitely many multicurves in a sphere with four or more marked points, so a priori it is very hard to show obstructions do not exist.  However, the ``arcs intersecting obstructions'' theorem of Pilgrim and Tan can in some cases drastically reduce the possible locations of obstructions.

Let $f:\S^2 \to \S^2$ be an orientation-preserving branched cover from the two-sphere to itself.  Denote the set of critical points of $f$ by $C_f$.  Define the postcritical set $P_f$\index{postcritical set} as follows:
\[P_f:=\bigcup_{n\geq 1}f^{n}(C_f).\]
The map $f$ is said to be \emph{postcritically finite}\index{postcritically finite} if the set $P_f$ is finite.

A \emph{marked branched cover} is a pair $(f,X)$, where $f:\S^2
\to \S^2$ is an orientation-preserving branched cover and $X$ is a
finite set containing $P_f$ such that $f(X) \subset X$.

\begin{definition}[Thurston equivalence of marked branched covers]
\label{Def_Thurston}
Two marked branched covers  $(f,X)$ and $(g,Y)$ are \emph{Thurston equivalent}\index{Thurston equivalent}\index{equivalent!Thurston} if there are two orientation-preserving homeomorphisms $\phi_1,\,\phi_2:\S^2\to\S^2$ such that
\[
    \phi_1\circ f = g\circ \phi_2
\]
and there exists an homotopy $\Phi: [0,1]\times\S^2\to\S^2$ with
$\Phi(0,\cdot)=\phi_1$ and $\Phi(1,\cdot)=\phi_2$ such that $\Phi(t,\cdot)|_{X}$
is constant in $t\in [0,1]$ with $\Phi(t,X)=Y$.  If $\phi_1$ and $\phi_2$ are homotopic to the identity map, the marked branched covers are said to be \emph{homotopic}.
\end{definition}

It is perhaps more intuitive to rephrase Thurston equivalence as saying that $f$ and $g$ are given only up to homotopy rel the marked points, and this homotopy may be chosen so that $f$ and $g$ are topologically conjugate.     

We say that a simple closed curve $\gamma$ is a \emph{simple closed curve in $(\S^2,X)$} if $\gamma \subset \S^2 \setminus X$.  Such a $\gamma$ is {\it
essential}\index{essential simple closed curve}\index{simple closed curve!essential} if both components of the complement $\S^2 \setminus
\gamma$ contain at least two points of $X$. 
Let $\gamma_0,\, \gamma_1$ be two simple closed curves in $(\S^2,X)$. We say that $\gamma_0$ and $\gamma_1$ are \emph{isotopic relative to $X$}, written $\gamma_0 \simeq_X \gamma_1$, if there exists a continuous, one-parameter family of simple closed curves in $(\S^2,X)$ joining $\gamma_0$ and $\gamma_1$. We use $[\gamma]$ to denote the isotopy class of a simple closed curve $\gamma$. A \emph{multicurve}\index{multicurve} is a collection of pairwise disjoint and non-isotopic essential simple closed curves in $(\S^2,X)$. A multicurve $\Pi$ is said to be $f$-stable\index{$f$-stable multicurve}\index{multicurve!$f$-stable}
if for every $\gamma \in \Pi$, every essential connected component
of $f^{-1}(\gamma)$ is isotopic relative to $X$ to some element of
$\Pi$.

\begin{definition}[Thurston linear transform]
For every $f$-stable multicurve $\Pi$ we define the
corresponding \emph{Thurston linear transform}\index{Thurston linear transform} $f_\Pi: \mathbb{R}^\Pi
\to \mathbb{R}^\Pi$ as follows:
\[
f_\Pi(\gamma)=\sum_{\gamma' \subset f^{-1}(\gamma)} \frac
{1}{\deg(f|_{\gamma'}: \gamma' \to \gamma)} [\gamma'],
\]
where $[\gamma']$ denotes the element of $\Pi$ isotopic to $\gamma'$ if it exists. If there are no such elements, the sum is taken to be zero.  Denote by $\lambda_\Pi$ the largest eigenvalue of $f_\Pi$  (by the Perron--Frobenius theorem, it exists and is non-negative real).
\end{definition}

The Thurston linear transform is also known equivalently as the \emph{Thurston matrix} or \emph{multicurve matrix}.\index{Thurston matrix}\index{multicurve matrix}

Suppose that $\Pi$ is a stable multicurve. A multicurve $\Pi$ is called a \emph{multi\-curve obstruction}\index{multicurve obstruction}\index{obstruction!multicurve} (or \emph{Thurston obstruction})\index{Thurston obstruction}\index{obstruction!Thurston} if
$\lambda_\Pi \geq 1$. A real-valued $n\times n$ matrix $A$ is called
\emph{irreducible} if for every entry $(i,j)$, there exists an integer
$k>0$ such that $A^k_{i,j}>0$. A multicurve $\Pi$ is said to be \emph{irreducible} if the matrix
representing the linear transform $f_\Pi$ is irreducible.

The statement of Thurston's theorem uses the notion of a hyperbolic orbifold.  We omit the definition, referring the reader to \cite{DH93} while observing that there are only a few well-understood cases where $O_f$ is not hyperbolic, and that $O_f$ is always hyperbolic if $f$ has at least three fixed branched points.  The latter is always the case for Newton maps of degree $d\geq 3$, so the restriction to hyperbolic orbifolds is of no concern to us.

\begin{theorem}[Thurston's theorem \cite{DH93, BCT}] \label{Thm_Thurston'sTheorem}\index{Thurston's theorem (on rational maps)}
A marked branched cover $(f,X)$ with
hyperbolic orbifold is Thurston equivalent to a marked rational map if and only if $(f,X)$ has no multicurve obstruction. Furthermore, if $(f,X)$ is unobstructed, the marked rational map is unique up to M\"obius conjugacy.
\end{theorem}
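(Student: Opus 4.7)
The plan is to follow the Teichm\"uller-theoretic strategy of Douady and Hubbard. Associated to the marked sphere $(\S^2,X)$ are its Teichm\"uller space $\mathcal{T}_X$ (complex structures on $\S^2$ up to isotopy rel $X$) and its moduli space $\mathcal{M}_X$. The branched cover $f$ induces a holomorphic \emph{pullback map} $\sigma_f:\mathcal{T}_X\to\mathcal{T}_X$ obtained by pulling back complex structures through $f$; the hypothesis $f(X)\subset X$ ensures that the pullback is well defined up to isotopy rel $X$. The first step I would establish is that $(f,X)$ is Thurston equivalent to a rational map if and only if $\sigma_f$ has a fixed point: a fixed point yields a complex structure in which $f$ becomes holomorphic (hence rational), and conversely any realization furnishes such an invariant complex structure. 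The entire problem is thereby reduced to analyzing the dynamics of $\sigma_f$ on $\mathcal{T}_X$.

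For the easier "only if" direction, suppose $(f,X)$ is realized by a rational map $g$. Given any stable multicurve $\Pi$, I would apply the Schwarz--Pick lemma to the unramified covering $g:g^{-1}(\S^2\setminus X)\to\S^2\setminus X$: each essential preimage of a geodesic representative of $\gamma\in\Pi$ is a quasi-geodesic in the covering, and comparing hyperbolic lengths before and after pullback yields a strict inequality forcing $\lambda_\Pi<1$. The hyperbolic orbifold hypothesis is precisely what guarantees the hyperbolic metric used here. Uniqueness (rigidity) will follow from the same contraction principle: Royden's theorem shows that $\sigma_f$ is weakly contracting in the Teichm\"uller metric and strictly contracting on the image of $\sigma_f\circ\sigma_f$, so any two fixed points must coincide, and the resulting conformal conjugacy between two realizations is then forced to be M\"obius.

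The main obstacle, and where the bulk of the work lies, is the "if" direction: assuming no Thurston obstruction, one must produce a fixed point of $\sigma_f$. The strategy is to iterate from a basepoint $\tau_0\in\mathcal{T}_X$ and show that the orbit $\{\sigma_f^n(\tau_0)\}$ descends to a relatively compact subset of $\mathcal{M}_X$; weak contraction in the Teichm\"uller metric then extracts a convergent subsequence whose limit is necessarily a fixed point. The geometric heart of the argument is ruling out escape to the boundary of $\mathcal{M}_X$. By Mumford's compactness criterion, such escape corresponds to the development of arbitrarily thin annuli in the hyperbolic structures along $\sigma_f^n(\tau_0)$, i.e.\ a collection $\Pi$ of simple closed curves whose hyperbolic lengths tend to zero. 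I would then show that the homotopy classes in $\Pi$ form a stable multicurve and, by comparing extremal moduli of annuli around curves in $\Pi$ with moduli of their preimage annuli under $f$ (using the Gr\"otzsch inequality to add moduli of homotopic preimage annuli), derive an asymptotic estimate governed by the spectral radius $\lambda_\Pi$. The no-obstruction hypothesis $\lambda_\Pi<1$ would then force the moduli to remain uniformly bounded, contradicting escape; hence escape implies $\lambda_\Pi\geq 1$, exactly a Thurston obstruction, contradicting the hypothesis. This modulus estimate, which converts Teichm\"uller-theoretic divergence into a combinatorial obstruction, is the technical core I would expect to spend the most time on.
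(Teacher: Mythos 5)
This theorem is quoted in the paper from \cite{DH93, BCT} and is not proved there, so there is no internal argument to compare against. Your outline is a faithful sketch of the standard Douady--Hubbard Teichm\"uller-theoretic proof that those sources carry out — the pullback map $\sigma_f$ on $\mathcal{T}_X$ with fixed points corresponding to realizations, rigidity from strict contraction of the second iterate under the hyperbolic-orbifold hypothesis, and existence via weak contraction plus Mumford compactness, with the Gr\"otzsch modulus estimates converting degeneration in moduli space into an $f$-stable multicurve with $\lambda_\Pi\geq 1$ — and you correctly identify the modulus estimate (together with showing the short curves stabilize to a stable multicurve) as the technical core.
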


We now present a theorem of Pilgrim and Tan \cite{PT} that will be used in Section \ref{NewtonMapsfromGraphs} to show that certain marked branched covers arising from graph maps do not have obstructions and are therefore equivalent to rational maps by Thurston's theorem.  First some notation will be introduced.  

Assume that $(f,X)$ is a marked branched cover of degree $d\geq 3$.  An {\em arc} in $(\S^2,X)$ is a continuous map $\alpha:[0,1]\to \S^2$ such
that $\alpha(0)$ and $\alpha(1)$ are in $X$, the map $\alpha$ is injective on $(0,1)$, and $X\cap\alpha((0,1))=\emptyset$.  A set of pairwise non-isotopic arcs in $(\S^2,X)$ is called an
{\em arc system}.\index{arc system} 

The following intersection number is used in the statement of Theorem~\ref{Thm_ArcsInterOb}; we use the symbol $\simeq$ to denote isotopy relative to $X$.

\begin{definition} [Intersection number] \label{Def_IntersectionNumber}
Let $\alpha$ and $\beta$ each be an arc or a
simple closed curve in $(\S^2,X)$. Their {\em intersection
number} is
\[
    \alpha\cdot\beta := \min_{\alpha'\simeq\alpha,
    \,\beta'\simeq\beta} \#\{(\alpha'\cap\beta')\setminus X\}\;.
\]
This intersection can be extended to arc systems and
multicurves as follows: let $A$ and $B$ each be an arc system or a multicurve in $(\S^2,X)$. Then
\[
    A\cdot B := \min_{A'\simeq A,
    \,B'\simeq B} \#\{(A'\cap B')\setminus X\}\;.
\]
\end{definition}

For an arc system $\Lambda$, we introduce a linear map $f_{\Lambda}:\mathbb{R}^{\Lambda}\rightarrow\mathbb{R}^{\Lambda}$, which is a rough analogue of the Thurston linear map for multicurves.  For $\lambda\in\Lambda$, let
\[
    f_{\Lambda}(\lambda):=\sum_{\lambda'\subset f^{-1}(\lambda)} [\lambda']_{\Lambda}\;,
\]
where $[\lambda']_\Lambda$ denotes the element of $\Lambda$ homotopic to $\lambda'$ rel $X$ (the sum is taken to be zero if there are no such elements).
It is said that $\Lambda$ is {\em irreducible} if the matrix representing $f_{\Lambda}$ is irreducible.  

Denote by $\tilde{\Lambda}(f^{n})$ the union of those components of
$f^{-n}(\Lambda)$ that are isotopic to elements of $\Lambda$
relative $X$, and define $\tilde{\Pi}(f^{n})$ for a multicurve $\Pi$ analogously.  The following is a special case of a theorem from \cite{PT} that gives control on the location of irreducible multicurve obstructions by asserting that they may not intersect certain preimages of irreducible arc systems.

\begin{theorem} [Arcs intersecting obstructions {\cite[Theorem~3.2]{PT}}]\label{Thm_ArcsInterOb}\index{arcs intersecting obstructions}
Let $(f,X)$ be a marked branched cover,
$\Pi$ an irreducible multicurve obstruction, and $\Lambda$ an
irreducible arc system. Suppose furthermore that
$\#(\Pi\cap\Lambda)=\Pi\cdot\Lambda$. Then exactly one of
the following is true:
\begin{enumerate}
\item $\Pi\cdot\Lambda=0$ and $\Pi\cdot f^{-n}(\Lambda)=0$
for all $n\geq 1$.
\item $\Pi\cdot\Lambda\neq 0$ and
for $n\geq 1$, each component of $\Pi$ is isotopic to a unique
component of $\tilde{\Pi}(f^{n})$. The mapping $f^{
n}:\tilde{\Pi}(f^{ n})\to \Pi$ is a homeomorphism and
$\tilde{\Pi}(f^{n})\cap (f^{-n}(\Lambda)-
\tilde{\Lambda}(f^{ n}))=\emptyset$. More precisely, for each $\gamma\in\Gamma$, there is exactly one curve $\gamma'\subset f^{-n}(\gamma)$ such that $\gamma'\cap\tilde{\Lambda}(f^n)\neq\emptyset$. Moreover, the curve $\gamma'$ is the unique component of $f^{-n}(\gamma)$ which is isotopic to an element of $\Pi$.

\end{enumerate}
\end{theorem}


\section{Extending maps on finite graphs} \label{Sec_ExtendingMapsOnGraphs} We present a sufficient condition under which a certain type of map of a graph in $\S^2$ has a unique continuous extension to the whole sphere up to equivalence. The following formulation follows \cite[Chapter 5]{BFH}.  The Alexander Trick is foundational to such results and will be used elsewhere.

\begin{lemma}[Alexander trick]
\label{Lem:AlexanderTrick}\label{lem:AlexanderTrick}
\index{Alexander's trick}
 Let $h:\S^1 \to \S^1$ be an
orientation-preserving homeomorphism. Then there exists an orientation-preserving homeomorphism $\ol{h}:\overline{\D} \to \overline{\D}$ such that $\ol{h}|_{\S^1}
=h$. The map $\ol{h}$ is unique up to isotopy relative $\S^1$.
\end{lemma}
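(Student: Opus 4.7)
The plan is to construct an explicit extension by radial coning, and to establish uniqueness via an Alexander-type isotopy that collapses any homeomorphism of $\overline{\D}$ fixing $\S^1$ pointwise to the identity.

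For existence, I would define $\ol{h}:\overline{\D}\to\overline{\D}$ by the formula $\ol{h}(re^{i\theta}) = r\cdot h(e^{i\theta})$ for $r \in (0,1]$ and $\theta\in\R$, together with $\ol{h}(0):=0$. This map sends the radial segment from $0$ to $e^{i\theta}$ affinely onto the radial segment from $0$ to $h(e^{i\theta})$, so both bijectivity and the boundary condition $\ol{h}|_{\S^1}=h$ are immediate. Continuity on $\overline{\D}\setminus\{0\}$ follows from continuity of $h$, while continuity at the origin follows from the identity $|\ol{h}(z)|=|z|$. The inverse $\ol{h}^{-1}(w) = |w|\cdot h^{-1}(w/|w|)$ (with $\ol{h}^{-1}(0)=0$) is continuous for the same reason, so $\ol{h}$ is a homeomorphism; orientation-preservation on $\overline{\D}$ is inherited from orientation-preservation of $h$ on $\S^1$.

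For uniqueness, let $\ol{h}_0$ and $\ol{h}_1$ be two such extensions, and set $g := \ol{h}_1\circ\ol{h}_0^{-1}$. Then $g$ is an orientation-preserving self-homeomorphism of $\overline{\D}$ restricting to the identity on $\S^1$. It suffices to produce an isotopy $(G_t)_{t\in[0,1]}$ from $\mathrm{id}$ to $g$ through homeomorphisms of $\overline{\D}$ that fix $\S^1$ pointwise, for then $G_t\circ\ol{h}_0$ is the desired isotopy from $\ol{h}_0$ to $\ol{h}_1$ relative to $\S^1$. I would write down the classical \emph{Alexander isotopy}
\[
G_t(z) \;:=\; \begin{cases} t\cdot g(z/t), & |z| \leq t, \\ z, & t \leq |z| \leq 1, \end{cases}
\]
for $t\in(0,1]$, together with $G_0:=\mathrm{id}$. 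The two pieces agree on the circle $|z|=t$ precisely because $g$ fixes $\S^1$, each $G_t$ is a homeomorphism as it is a rescaling of $g$ on a subdisk glued to the identity outside, and one has $G_1=g$ and $G_t|_{\S^1}=\mathrm{id}$ for all $t$.

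The argument is entirely elementary once the Alexander isotopy is in hand; the one place to be careful is continuity of $(t,z)\mapsto G_t(z)$ at $t=0$, which I would verify by the uniform estimate $|G_t(z)-z|\leq 2t$ on the shrinking disk $|z|\leq t$, giving uniform convergence $G_t\to\mathrm{id}$ as $t\to 0^{+}$.
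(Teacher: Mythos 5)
Your proof is correct and is precisely the classical argument (radial coning for existence, the Alexander isotopy $G_t(z)=t\,g(z/t)$ for $|z|\le t$ for uniqueness); the paper itself states this lemma without proof, citing it as a standard fact following \cite[Chapter 5]{BFH}. All the delicate points — agreement of the two pieces on $|z|=t$, continuity at the origin via $|\ol{h}(z)|=|z|$, and joint continuity at $t=0$ via the estimate $|G_t(z)-z|\le 2t$ — are handled correctly.
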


\begin{definition}[Finite graph] Let $V$ be a finite set of distinct points in $\S^2$. Each element of $V$ is called a \emph{vertex}. An \emph{edge} is a subset of $\S^2$ of the form $\lambda(I)$ where $I=[0,1]$ and
\begin{itemize}
 \item $\lambda:I\to\S^2$ is continuous and injective on $(0,1)$, and
\item $\lambda(x)\in V \iff x \in \{0,1\}$.
\end{itemize}
Let $E$ be a finite set of edges that (pairwise) intersect only at vertices.
A \emph{finite graph} (in $\S^2$) is a pair of the form $(V,E).$
\end{definition}
 We sometimes omit the reference to the ambient space $\mathbb{S}^2$ though it is always implicit.

\begin{definition}[Subgraphs]
Let $\Gamma_1=(V_1,E_1)$ and $\Gamma_2=(V_2,E_2)$ be finite graphs. We say that $\Gamma_1$ is a \emph{subgraph} of $\Gamma_2$ (denoted $\Gamma_1\subset\Gamma_2$) if $V_1\subset V_2$ and $E_1\subset E_2$. 
\end{definition}

\begin{definition}[Graph map]
Let $\Gamma_1,\Gamma_2$ be connected finite graphs. A continuous map $f:\Gamma_1 \to \Gamma_2$ is called a \emph{graph map}\index{graph map} if it is injective on each edge of $\Gamma_1$, if $f(V_1)\subset V_2$ and $f^{-1}(V_2)\subset V_1$, and $f$ is compatible with the embeddings of the graphs in $\S^2$.
\end{definition}

The compatibility condition on $f$ is a local condition at each vertex $v$, described as follows. If $f$ is locally injective at $v$, then $f$ is required to preserve the cyclic ordering at $v$. On the other hand, if $f$ is not locally injective at $v$, then the cyclic ordering of the half-edges at $v$ and $f(v)$ should be compatible with a local orientation-preserving cover of degree $\deg_v f$. We will use this definition only when the number of half-edges at $v$ equals $\deg_v f$ times the number of edges at $f(v)$, and thus compatibility means that the half-edges at $v$ have the same cyclic ordering as $f(v)$, repeated $\deg_v f$ times.

\begin{definition}[Regular extension]
\label{Def_GraphMap}\index{regular extension (of graph map)} Let $f:\Gamma_1\to\Gamma_2$ be a graph map. An
orientation-preserving branched cover $\ol{f}:\S^2\to\S^2$ is
called a \emph{regular extension} of $f$ if $\ol{f}|_{\Gamma_1}=f$
and $\ol{f}$ is injective on each component of $\S^2\setminus
\Gamma_1$.
\end{definition}
It follows that every regular extension $\ol{f}$ may have critical points only at the vertices of $\Gamma_1$, and the local degree of $\ol{f}$ at $v$ coincides with $\deg_v(f)$.
\begin{lemma}[Isotopic graph maps {\cite[Corollary 6.3]{BFH}}]
\label{Lem_IsotopGraphMaps}  Let
$f,g:\Gamma_1\to\Gamma_2$ be two graph maps that coincide on the
vertices of $\Gamma_1$ such that for each edge $e$ in $\Gamma_1$ we have $f(e)=g(e)$ as a set. Suppose that $f$ and $g$
have regular extensions $\ol{f},\ol{g}:\S^2\to\S^2$. Then there
exists a homeomorphism $\psi:\S^2\to\S^2$, isotopic to the identity
relative the vertices of $\Gamma_1$, such that $\ol{f}=\ol{g} \circ
\psi$.
\end{lemma}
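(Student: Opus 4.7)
The plan is to construct $\psi$ piece by piece on the cell decomposition of $\S^2$ induced by $\Gamma_1$: first on edges of $\Gamma_1$, then on each complementary disk, and finally to verify global consistency and the isotopy condition using the Alexander trick.

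On each edge $e$ of $\Gamma_1$, both $f|_e$ and $g|_e$ are homeomorphisms of $e$ onto the common arc $f(e)=g(e)\subset\Gamma_2$, and they agree at the two endpoints. I would set $\psi|_e := (g|_e)^{-1}\circ f|_e$; this is an orientation-preserving self-homeomorphism of $e$ fixing its endpoints. Pasting these gives a homeomorphism $\psi|_{\Gamma_1}:\Gamma_1\to\Gamma_1$ with $f = g\circ \psi|_{\Gamma_1}$, fixing the vertex set $\Gamma_1'$ and carrying each edge to itself.

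Next I extend $\psi$ to each component $U$ of $\S^2\setminus\Gamma_1$. Since $\ol{f}$ and $\ol{g}$ are regular extensions, $\ol{f}|_{\ol{U}}$ and $\ol{g}|_{\ol{U}}$ are homeomorphisms onto the closures of complementary components $W$ and $W'$ of $\S^2\setminus\Gamma_2$. The key point (and what I expect to be the main technical step) is to check that $W=W'$. Both maps send $\partial U$ to the same loop $\ol{f}(\partial U)=\ol{g}(\partial U)$ in $\Gamma_2$, because $f(e)=g(e)$ as sets for each edge on $\partial U$; the fact that $\psi|_e$ is orientation-preserving means this loop is traversed in the same cyclic direction by $\ol{f}$ and by $\ol{g}$, and since both branched covers are orientation-preserving, $U$ must be sent to the same side of that loop. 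Once $W=W'$ is established, define $\psi|_{\ol{U}} := (\ol{g}|_{\ol{U}})^{-1}\circ \ol{f}|_{\ol{U}}:\ol{U}\to\ol{U}$. This is a homeomorphism agreeing with $\psi|_{\partial U}$ as defined above, so the definitions glue to a global orientation-preserving homeomorphism $\psi:\S^2\to\S^2$. By construction $\ol{f}=\ol{g}\circ\psi$ on every $\ol{U}$, and on $\Gamma_1$ this reduces to $f=g\circ\psi|_{\Gamma_1}$, giving the identity $\ol{f}=\ol{g}\circ\psi$ on all of $\S^2$.

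It remains to show $\psi\simeq \mathrm{id}$ rel $\Gamma_1'$. On each edge $e$, the orientation-preserving self-homeomorphism $\psi|_e$ fixing $\partial e$ is isotopic to the identity rel $\partial e$ by straight-line interpolation in a chart; concatenating these gives an isotopy $\psi|_{\Gamma_1}\simeq \mathrm{id}_{\Gamma_1}$ rel $\Gamma_1'$. For each closed disk $\ol{U}$, first extend this boundary isotopy into a small collar of $\partial U$ in $\ol{U}$ to deform $\psi|_{\ol{U}}$, rel $\Gamma_1'\cap\partial U$, to a homeomorphism that is the identity on $\partial U$; then apply the Alexander trick (Lemma~\ref{Lem:AlexanderTrick}) to isotope this resulting disk homeomorphism to the identity rel $\partial U$, hence rel vertices. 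Pasting these per-disk isotopies together (they agree on each shared edge with the collar isotopy) produces a global isotopy $\psi\simeq\mathrm{id}_{\S^2}$ rel $\Gamma_1'$, completing the proof.
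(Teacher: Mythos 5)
The paper does not prove this lemma at all --- it is quoted verbatim from \cite[Corollary 6.3]{BFH} --- and your cell-by-cell construction (reparametrize edge-wise by $(g|_e)^{-1}\circ f|_e$, transport over each complementary disk via the regular extensions, then apply the Alexander trick) is exactly the standard argument behind that cited result, and it is correct. One small imprecision: $\ol{f}(U)$ need not be the closure of a complementary component of $\Gamma_2$ (since $\ol{f}^{-1}(\Gamma_2)$ may properly contain $\Gamma_1$), and $\ol{f}|_{\ol{U}}$ need not be injective on $\partial U$ (two boundary edges of $U$ may have the same image); but neither affects your argument, which only needs the equality of the open images $\ol{f}(U)=\ol{g}(U)$ --- correctly justified by your orientation argument along a boundary edge --- and the continuity of $(\ol{g}|_{U})^{-1}\circ\ol{f}|_{U}$ up to $\partial U$, where it matches the edge-wise definition.
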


We must establish some notation for the following proposition from \cite{BFH}.  Let $f:\Gamma_1\to\Gamma_2$ be a graph map. For each
vertex $v$ of $\Gamma_i$ with fixed $i\in\{1,2\}$, choose a neighborhood $U_v\subset\S^2$ such
that all edges of $\Gamma_i$ that enter $U_v$ are incident to $v$, the vertex $v$ is the only vertex of $\Gamma_i$ in $U_v$, and the neighborhoods $U_v$ and $U_w$ are disjoint for all vertices $v\neq w$ in $\Gamma_i$. We
may assume without loss of generality that in local coordinates, $U_v$ is a round disk of radius $1$ that is centered at $v$ and that the intersection of any edge of $\Gamma_i$ with $U_v$ is either empty or a radial line segment.  Without loss of generality, we may assume that $f|_{U_v\cap\Gamma_1}$ is length-preserving for all vertices $v$ in $\Gamma_1$.

We describe how to explicitly extend $f$ to each $U_v$.  For a vertex $v\in\Gamma_1$, let $\gamma_1$ and
$\gamma_2$ be two adjacent edges ending there. In local coordinates,
these are radial lines at angles $\Theta_1,\Theta_2$ where
$0<\Theta_2-\Theta_1\leq 2\pi$ (if $v$ is an endpoint of $\Gamma_1$,
then set $\Theta_1=0$, $\Theta_2=2\pi$). In the same way, choose
arguments $\Theta_1',\,\Theta_2'$ for the image edges in $U_{f(v)}$
and extend $f$ to a map $\tilde{f}$ on $\Gamma_1\cup\bigcup_v U_v$
defined by
\begin{equation}\label{eqn:localExtension}
\tilde{f}(\rho,\Theta)=\left(\rho,
\frac{\Theta_2'-\Theta_1'}{\Theta_2-\Theta_1}\cdot(\Theta-\Theta_1)+\Theta_1'\right),
\end{equation}
where $(\rho,\Theta)$ are polar coordinates in the sector bounded by
the rays at angles $\Theta_1$ and $\Theta_2$. In particular, sectors are
mapped onto sectors in an orientation-preserving way.

\begin{proposition}[{\cite[Proposition 5.4]{BFH}}]
\label{Prop_RegExt}  A graph map
$f:\Gamma_1\to\Gamma_2$ has a regular extension if and only if for
every vertex $y\in\Gamma_2$ and every component $U$ of
$\S^2\setminus\Gamma_1$, the extension $\tilde{f}$ is injective on
\[
    \bigcup_{v\in f^{-1}(y)} U_v \cap U\;.
\]
\end{proposition}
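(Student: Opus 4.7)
The plan is to split into the two implications, with the backward direction being the substantive half. Both rest on examining the sector-by-sector behavior of $\tilde f$ near vertices, and both ultimately appeal to local data at a single target vertex $y\in\Gamma_2$.

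For the forward direction, suppose $f$ has a regular extension $\ol f$. By definition $\ol f$ is injective on each component $U$ of $\S^2\setminus\Gamma_1$, and near a vertex $v$ it is locally conjugate to $z\mapsto z^{\deg_v\ol f}$ in orientation-preserving coordinates. Because $\ol f$ and $\tilde f$ both extend $f|_{\Gamma_1}$ and are orientation-preserving, each must send the open sector of $U_v$ bounded by two adjacent edges $e_1,e_2$ to the sector of $U_{f(v)}$ bounded by $f(e_1),f(e_2)$. So after an isotopy rel $\Gamma_1\cup\bigcup_v\partial U_v$, the restrictions $\ol f|_{U_v}$ and $\tilde f|_{U_v}$ agree, and the injectivity of $\ol f$ on $U$ transfers directly to injectivity of $\tilde f$ on $\bigcup_{v\in f^{-1}(y)}U_v\cap U$.

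For the backward direction, assume the injectivity hypothesis and set $\ol f=\tilde f$ on $\Gamma_1\cup\bigcup_v U_v$. Fix a component $U$ of $\S^2\setminus\Gamma_1$; since $\Gamma_1$ is a connected finite graph in the sphere, $U$ is an open topological disk. Let $U^\circ:=U\setminus\bigcup_{v\in\Gamma_1'}\ol{U_v}$; this is again an open disk whose boundary $\partial U^\circ$ is a simple closed curve formed alternately of sub-arcs of edges of $\Gamma_1$ and circular sub-arcs of the various $\partial U_v$. On $\partial U^\circ$ the map $\tilde f$ is already specified. Granted that $\tilde f|_{\partial U^\circ}$ is an orientation-preserving homeomorphism onto a Jordan curve in $\S^2\setminus\Gamma_2$ bounding an open disk $W^\circ$ contained in a single component of $\S^2\setminus\Gamma_2$, the Alexander trick (Lemma \ref{lem:AlexanderTrick}) extends it to an orientation-preserving homeomorphism $U^\circ\to W^\circ$, which we take to be $\ol f|_{U^\circ}$. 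Gluing these extensions across all components of $\S^2\setminus\Gamma_1$ yields a well-defined orientation-preserving branched cover $\ol f:\S^2\to\S^2$ that restricts to $f$ on $\Gamma_1$ and is injective on each complementary component, i.e.\ a regular extension.

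The main obstacle is the claim just granted, namely that $\tilde f|_{\partial U^\circ}$ is an injection with image a Jordan curve bounding a single complementary disk. Injectivity on each circular arc $\partial U_v\cap\partial U^\circ$ is immediate from the explicit sector formula defining $\tilde f$, and injectivity along each edge sub-arc comes from $f$ being a graph map. The only remaining failure mode is that two sub-arcs of $\partial U^\circ$ meeting a common target vertex $y$ (either from distinct preimage vertices $v,v'\in f^{-1}(y)$, or from two arcs of a common $\partial U_y$) map to overlapping sub-arcs in $U_y$; any such overlap, however, would force the two sectors adjacent to these sub-arcs inside $\bigcup_{v\in f^{-1}(y)}U_v\cap U$ to have overlapping images in $U_y$, contradicting the hypothesis. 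Finally, tracing $\partial U^\circ$ once in the boundary orientation inherited from $U$ and invoking the orientation-preserving sector formula shows that the image winds monotonically around one complementary disk of $\S^2\setminus\Gamma_2$, as required.
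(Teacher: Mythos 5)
The paper offers no proof of this proposition: it is quoted directly from \cite{BFH}, so there is nothing internal to compare your argument against, and it must be judged on its own terms. Your forward direction and the overall skeleton of the backward direction (restrict to $U^\circ=U\setminus\bigcup_v\ol{U_v}$, control the boundary behaviour, finish with the Alexander trick) follow the standard route.

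There is, however, a genuine gap in the backward direction: the claim that $\tilde f|_{\partial U^\circ}$ is injective with image a Jordan curve is false precisely in the cases that matter, namely whenever two boundary edges of $U$ have the same image under $f$ --- equivalently, whenever the extension is supposed to have a critical point on $\partial U$. Concretely, let $\Gamma_1$ consist of two edges $e_1,e_2$ joining $v$ to $w$ (a bigon), let $\Gamma_2$ be a single edge $e'$ joining $y$ to $z$, and let $f$ send both $e_i$ to $e'$. The hypothesis of the proposition holds: each complementary component $U$ of $\Gamma_1$ meets $U_v$ in a single open sector, whose $\tilde f$-image is the slit disk $U_y\setminus e'$, so injectivity on $\bigcup_{v\in f^{-1}(y)}U_v\cap U$ is immediate. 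A regular extension exists (a degree-two cover branched at $v$ and $w$). Yet $\tilde f|_{\partial U^\circ}$ identifies the two edge sub-arcs of $\partial U^\circ$ lying on $e_1$ and on $e_2$, and sends both endpoints of the circular arc $\partial U_v\cap U$ to the single point $e'\cap\partial U_y$. So your ``only remaining failure mode'' analysis is incomplete: overlapping edge sub-arcs with a common image edge, and shared endpoints of circular arcs, are \emph{not} excluded by the injectivity hypothesis --- nor should they be, since they are exactly the signature of a critical point. As written, your argument would wrongly conclude that no regular extension exists whenever $f$ is to be critical at a vertex. The repair keeps your skeleton but replaces the false injectivity claim: one must show that $\tilde f|_{\partial U^\circ}$ induces a \emph{degree-one} map from the boundary circle of $U^\circ$ onto the boundary circle (in the sense of prime ends, i.e.\ accessible boundary counted with multiplicity) of the appropriate component $W^\circ$ of $W\setminus\bigcup_y\ol{U_y}$. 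It is here that the hypothesis is genuinely used --- disjointness of the sector images bounds the total angle swept at each target vertex by $2\pi$, forcing the boundary degree to be one --- and only then can the Alexander trick be applied to the abstract closed disks $\ol{U^\circ}$ and $\ol{W^\circ}$ rather than to subsets of $\S^2$.
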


The fundamental combinatorial object in our classification of Newton maps is a finite graph $\Sigma$ equipped with a self-map $f:\Sigma\to\Sigma$ (Definition \ref{Def_AbstractExtNewtGraph}).  Strictly speaking,  $f$ is in general not a graph map since Newton ray edges contain finitely many preimages of vertices in the Newton graph that are not vertices in $\Sigma$ (these vertices are purposely ignored on our way to producing a finite graph).  This motivates the following weaker definition which is identical except that we no longer assume $f^{-1}(V_2)\subset V_1$.

\begin{definition}[Weak graph map]
 A continuous map $f:\Gamma_1 \to \Gamma_2$ is called a \emph{weak graph map}\index{weak graph map}\index{graph map!weak} if it is injective on each edge of $\Gamma_1$, if $f(V_1)\subset V_2$, and $f$ is compatible with the embeddings of the graphs in $\S^2$.
\end{definition}

\begin{remark}\label{rem_weakGraphMap}
Given a weak graph map $f:\Gamma_1 \to \Gamma_2$, the combinatorics of the domain can be slightly altered to produce a graph map $\hat{f}:\hat{\Gamma}_1\rightarrow\Gamma_2$ in the following natural way.  We take the graph $\hat{\Gamma}_1$ to have vertices given by $f^{-1}(V_2)$, and edges given by the closures of complementary components of $\Gamma_1\setminus f^{-1}(V_2)$. We simply take $\hat{f}=f$.
\end{remark}


\section{Abstract extended Newton graph} \label{Chap_AbstrExtNewtGraph} 
In \cite{LMS1}, we extracted from every postcritically finite Newton map an extended Newton graph (Section 6.1), and we axiomatized these graphs in Section 7.  In this section we review the definition of the abstract extended Newton graph which will be used in Section \ref{Sec_ProofThmBijection} of the present work to classify postcritically finite Newton maps. Abstract extended Newton graphs consist of three pieces: abstract Newton graphs, abstract extended Hubbard trees, and abstract Newton rays connecting the first two objects.  

The definition of abstract extended Hubbard trees was given in Definition~4.4 of \cite{LMS1}, and will not be repeated here.  We simply note that it is the usual definition of degree $d$ abstract Hubbard tree from \cite{Poirier}, where the set of marked points includes all periodic points of periods up to some fixed length $n$ (since postcritically finite Newton maps cannot have parabolic cycles, the number of periodic points of period $i$ equals $d^i$).  Such an abstract extended Hubbard tree is said to have \emph{cycle type $n$}.

To define the abstract Newton graph, it is necessary to first define the abstract channel diagram.

\begin{definition}
\label{Def_ChannelDiagram}
\index{abstract channel diagram}\index{channel diagram!abstract} An \emph{abstract channel diagram} of
degree $d\geq 3$ is a graph $\Delta \subset \S^2$ with vertices
$v_\infty, v_1,\dots,v_d$ and edges $e_1,\dots,e_l$ that satisfies the
following:
\begin{itemize}
\item $l\leq 2d-2$;
\item each edge joins $v_\infty$ to some $v_i$ for $i\in\{1,2,...,d\}$; 
\item each $v_i$ is connected to $v_\infty$ by at least one edge;
\item \label{necessaryCondition} if $e_i$ and $e_j$ both join $v_\infty$ to $v_k$, then each connected component of
$\S^2\setminus \ol{e_i\cup e_j}$ contains at least one vertex of
$\Delta$.
\end{itemize}
\end{definition}

The classification of postcritically fixed Newton maps was given in terms of a combinatorial object called the ``abstract Newton graph" \cite{DMRS}. We define the term almost identically except that in the following definition Condition (3) is relaxed from equality to an inequality (this corresponds to the fact that postcritically finite maps may have critical points that are not eventually fixed).

\begin{definition}[Abstract Newton graph]
\label{Def_NewtonGraph}
\index{abstract Newton graph}\index{Newton graph!abstract}
 Let $\Gamma$ be a connected finite
graph in $\S^2$ with vertex set $V(\Gamma)$ and $f:\Gamma\to\Gamma$ a
graph map. The pair $(\Gamma,f)$ is called an \emph{abstract Newton
graph of level $\mathcal{N}_{\Gamma}$} if it satisfies the following conditions:
\begin{enumerate}
\item[(1)] There exists $d_{\Gamma}\geq 3$ and an abstract channel diagram
$\Delta\subsetneq\Gamma$ of degree $d_\Gamma$ such that
 $f$ fixes each vertex and each edge of $\Delta$ (pointwise).

\item[(2)] If $v_\infty, v_1, \dots,v_{d_\Gamma}$ are the vertices of $\Delta$, then
$v_i\in \ol{\Gamma\setminus\Delta}$ if and only if $i\neq \infty$.
 Moreover, there are exactly $\deg_{v_i}(f)-1\geq 1$ edges in $\Delta$ that connect $v_i$ to $v_\infty$ for
 $i\neq \infty$.

\item[(3)] $\sum_{x\in V(\Gamma)} \left(\deg_x f-1\right) \leq 2d_{\Gamma}-2$. 

\item[(4)] $\N_\Gamma$ is the minimal integer so that $f^{\N_\Gamma-1}(v)\in\Delta$ for all $v\in V(\Gamma)$ with $\deg_v f>1$.

\item[(5)] $f^{\N_\Gamma}(\Gamma)\subset\Delta$.

\item[(6)] For every $v\in V(\Gamma)$ with $f^{\N_\Gamma-1}(v)\in\Delta$, the number of adjacent edges in $\Gamma$ equals $\deg_v f$ times the number of edges adjacent to $f(v)$.

\item[(7)] The graph $\overline{\Gamma\setminus\Delta}$ is connected.

\item[(8)] \label{Cond_Extension} For every vertex $y\in V(\Gamma)$ and every component $U$ of $\S^2\setminus\Gamma$, the local extension $\tilde{f}$ from Equation (\ref{eqn:localExtension}) is injective on
$\bigcup_{v\in f^{-1}(y)} U_v \cap U\;.$

\end{enumerate}
\end{definition}

Next we define abstract Newton rays.  Let $\Gamma$ be a finite connected graph embedded in $\S^2$ and $f: \Gamma \to \Gamma$ a weak graph map so that after $f$ is promoted to a graph map in the sense of Remark \ref{rem_weakGraphMap}, it can be extended to a branched cover $\overline{f}:\S^2 \to \S^2$. 

\begin{definition}[Abstract Newton ray]
\label{Def_AbstractNewtonRay}\index{abstract Newton ray}\index{Newton ray!abstract}
{Let $\mathcal{R}$ be an arc in $\S^2$ whose endpoints are denoted $i(\mathcal{R})$ and $t(\mathcal{R})$. Then $\mathcal{R}$ is called  an \emph{abstract Newton ray with respect to $(\Gamma,f)$} if $\mathcal{R} \cap \Gamma=\{i(\mathcal{R})\}$ and $\overline{f}(\mathcal{R})= \mathcal{R} \cup \mathcal{E}$, where $\mathcal{E}$ is a (possibly empty) subgraph of $\Gamma$. Such an abstract Newton ray is called a 
\emph{periodic abstract Newton ray with respect to $(\Gamma,f)$}} if moreover
there is a minimal positive integer $m$ so that $\overline{f}^m(\mathcal{R})= \mathcal{R} \cup \mathcal{E}$, where $\mathcal{E}$ is a (possibly empty) subgraph of $\Gamma$.
We say that the integer $m$ is \emph{the period} of $\mathcal{R}$, and that $\mathcal{R}$ \emph{lands} at $t(\mathcal{R})$. 
\end{definition}
\begin{definition}[Preperiodic abstract Newton ray]
\label{Def_AbstractNewtonRayPrePer} An abstract Newton ray $\mathcal{R'}$  is called 
a \emph{preperiodic abstract Newton ray with respect to $(\Gamma,f)$} if the following hold:
\begin{itemize}
\item there is a minimal integer $l>0$ such that $\overline{f}^l(\mathcal{R'}) = \mathcal{R} \cup \mathcal{E}$, where $\mathcal{E}$ is a (possibly empty) subgraph of $\Gamma$ and $\mathcal{R}$ is a periodic abstract Newton ray with respect to $(\Gamma,f)$.
\item $\mathcal{R'}$ is not a periodic abstract Newton ray with respect to $(\Gamma,f)$.
\end{itemize}
We say that the integer $l$ is \emph{the preperiod} of $\mathcal{R}'$, and that $\mathcal{R}'$ \emph{lands} at $t(\mathcal{R}')$.\end{definition}

Now we are ready to introduce the concept of an abstract extended Newton graph. Later we prove that this graph carries enough information to characterize postcritically finite Newton maps.

\begin{definition}[Abstract extended Newton graph]
\label{Def_AbstractExtNewtGraph}
\index{abstract extended Newton graph}\index{extended Newton graph!abstract}\index{Newton graph!abstract extended} 
Let $\Sigma \subset \S^2$ be a finite connected graph, and let $f:\Sigma \to \Sigma$ be a weak graph map.  A pair $(\Sigma,f)$ is called an \emph{abstract extended Newton graph} if
the following are satisfied:

\begin{enumerate}
\renewcommand{\theenumi}{(\arabic{enumi})}

\item \label{Cond_NewtonGrah} (Abstract Newton graph) There exists a positive integer $\N$ and an abstract Newton graph
$\Gamma$ at level $\N$ so that $\Gamma \subseteq \Sigma$. Furthermore $\N$ is minimal so that condition \ref{Cond_TreesSeparated} holds.

\item \label{Cond_PerHubbardTrees} (Periodic Hubbard trees) There is a finite collection of  (possibly degenerate) minimal abstract extended Hubbard trees $H_i \subset \Sigma$ which are disjoint from $\Gamma$, and for each $H_i$ there is a minimal positive
integer $m_i\geq 2$ called the \emph{period of the tree} such that $f^{m_i}\left(H_i\right)=H_i.$

\item \label{Cond_PrePerHubbardTrees} (Preperiodic trees) There is a finite collection of possibly degenerate trees 
$H'_i \subset \Sigma$ of preperiod $\ell_i$, i.e. there is a minimal positive integer $\ell_i$ so that $f^{\ell_i}(H'_i)$ is a periodic Hubbard tree ($H'_i$ is not necessarily a Hubbard tree).  Furthermore for each $i$, the tree $H'_i$ contains a critical or postcritical point.

\item \label{Cond_TreesSeparated} (Trees separated) Any two different periodic or pre-periodic Hubbard trees lie in different complementary components of $\Gamma$.

\item \label{Cond_NewtonRaysPer}
(Periodic Newton rays) For every periodic abstract extended Hubbard tree $H_i$ of period $m_i$, the graph $\Sigma$ contains exactly one periodic abstract Newton ray $\mathcal{R}_i$ with respect to $(\Gamma,f)$.  The ray lands at a repelling fixed point $\omega_i\in H_i$ of $f^{m_i}$ and has period $m_i$.

\item \label{Cond_NewtonRaysPreper}
(Preperiodic Newton rays) For every preperiodic tree $H'_i$, there exists at least one preperiodic abstract Newton ray in $\Sigma$ with respect to $(\Gamma,f)$ connecting a vertex of $H'_i$ to $\Gamma$.

\item \label{Cond_uniqueExtend} (Unique extendability) For every vertex $y\in V(\Sigma)$ and every component $U$ of $\S^2\setminus\Sigma$, the local extension $\tilde{f}$ from Equation (\ref{eqn:localExtension}) is injective on
$\bigcup_{v\in f^{-1}(y)} U_v \cap U$.

\item \label{Cond_DegreeExtGraph} (Topological admissibility) $\sum_{x\in V(\Sigma)} \left(\deg_x f-1\right) = 2d_{\Gamma}-2$, where $d_{\Gamma}$ is the degree of the abstract channel diagram $\Delta \subset \Gamma$.

\item \label{Cond_TypesOfEdges} (Edges and vertices) Every edge in $\Sigma$ must be one of the following three types:
\begin{itemize}
\item Type N: An edge in the abstract Newton graph $\Gamma$ of condition \ref{Cond_NewtonGrah}.
\item Type H: An edge in a periodic or pre-periodic abstract Hubbard tree of condition \ref{Cond_PerHubbardTrees} or \ref{Cond_PrePerHubbardTrees}.
\item Type R: A periodic or pre-periodic abstract Newton ray with respect to $(\Gamma,f)$ from condition \ref{Cond_NewtonRaysPer} or \ref{Cond_NewtonRaysPreper}.
\end{itemize}
As a consequence, every vertex of $\Sigma$ is either a Hubbard tree vertex or a Newton graph vertex.

\end{enumerate}

\end{definition}

\begin{remark}[Regular extension]
\index{regular extension}
 The purpose of condition \ref{Cond_uniqueExtend} is that after $f$ has been upgraded to a graph map following Remark \ref{rem_weakGraphMap}, the hypothesis of Proposition \ref{Prop_RegExt} is met. Thus $f$ has a regular extension $\ol{f}$ which is unique up to Thurston equivalence.
\end{remark}

\begin{remark}[Implied auxiliary edges]\label{rem:redundantArcs}  
Suppose that $H_i$ is a Hubbard tree (or Hubbard tree preimage) in some complementary component $U_i$ of $\Gamma$ with connecting Newton ray $R_i$.  If $H_i$ contains a critical point, the existence of a regular graph map extension from Condition \ref{Cond_uniqueExtend} implies that $\Sigma$ must have at least one pre-periodic Newton ray edge distinct from $R_i$ connecting $H_i$ to $\Gamma$.  All such pre-periodic rays must map to $f(R_i)$ under $f$ (ignoring the parts in $\Gamma$ as usual), and each such ray is called an \emph{auxiliary edge corresponding to $R_i$}.
\index{auxiliary edge}
\end{remark}

\begin{remark}(Consistency in \cite{LMS1}) It is well-known that a polynomial may have a fixed point that is the landing point of a non-fixed periodic cycle of rays. On the other hand, each polynomial has at least one fixed point that is the landing point of a fixed ray (in the quadratic case, this is the so-called $\beta$ fixed point). There was some effort in \cite{LMS1} to allow more generally that a periodic Newton ray land at a fixed point in a Hubbard tree through a higher period access. Unfortunately this was not done consistently, and so there are some matters that we would like to clarify. First, Condition (5) of Definition 7.3 in \cite{LMS1} might seem to permit accesses of higher period $r_i$, but the uniqueness of Newton rays in a given complementary component of $\Gamma$ (found in the same condition) immediately implies that $r_i=1$. Thus, despite the superficial difference, Condition (5) in Definition 7.3 of \cite{LMS1} and  Definition \ref{Def_AbstractExtNewtGraph} above are actually equivalent. The only remaining issue is that in the proof of Theorem 6.2 in \cite{LMS1}, one should always choose the fixed point of the first return map on the Hubbard tree to be the landing point of a fixed external ray under straightening. This ensures that the graph satisfies Condition \ref{Cond_NewtonRaysPer}.

\end{remark}


\section{Equivalence of abstract extended Newton graphs}\label{Chap_GraphEquivalence}

When the extended Newton graph was constructed for a postcritically finite Newton map in \cite{LMS1}, the Newton graph and Hubbard tree edges were constructed intrinsically, but the construction of the Newton rays involved many choices.  The endpoints and accesses of the Newton rays were chosen arbitrarily, and in the case of non-degenerate Hubbard trees, there are a countably infinite number of homotopy classes of arcs by which the tree could be connected to the Newton graph (corresponding to the fact that removing the Hubbard tree from the complementary component of the Newton graph produces a topological annulus).

 Let $(\Sigma_1,f_1)$ and $(\Sigma_2,f_2)$ be two abstract extended Newton graphs.  In this section, we define an equivalence relation for abstract extended Newton graphs so that we can tell from the combinatorics of $(\Sigma_1,f_1)$ and $(\Sigma_2,f_2)$ whether or not their extensions to branched covers are Thurston equivalent.    In fact, the main difficulty in determining equivalence of these graphs comes from establishing the equivalence of extensions across the topological annuli just mentioned. This motivates the following notation: for an abstract Newton graph $\Sigma$, denote by $\Sigma^-$ the resulting graph when all edges of type R are removed; only the type N and H edges remain.  We keep the endpoints of the removed edges as vertices of $\Sigma^-$.

The combinatorial equivalence given below in Definition \ref{Def_ThurstonEquivalenceAbstExtGraphs} must somehow encode the Thurston class of graph map extensions to complementary components of $\Sigma_1^-$ and $\Sigma_2^-$ that contain non-degenerate Hubbard trees (for other types of components it is already clear how to proceed because they are either topological disks or once-punctured disks).  This is our primary focus from now until the definition is given. 

\begin{definition}[Newton ray grand orbit]\index{Newton ray grand orbit}  The (forward) \emph{orbit} of a Newton ray $R$ in an abstract extended Newton graph $\Sigma$ is the set of all Newton rays $R'$ in $\Sigma$ so that $f^k(R)$ contains $R'$ for some $k$.  The \emph{grand orbit} of a Newton ray $R$ in $\Sigma$ is the union of all Newton rays $R'$ in $\Sigma$ whose orbit contains an edge in the orbit of $R$.
\end{definition}

\begin{remark}[Some simplifying assumptions]\label{rem:raySimplifying}
To simplify notation, we assume in Sections \ref{subsec_NewtonRayEndpoints} and  \ref{subsec_EquivNewtonRayGO} that  $\Sigma_1$ and $\Sigma_2$ are combinatorially and dynamically equal apart from their Newton rays.  Specifically this means that the identity map on $\S^2$ induces a graph isomorphism between the Newton graphs of $\Sigma_1$ and $\Sigma_2$ (from now on denoted $\Gamma$), as well as the Hubbard trees.  We also assume that $f_1=f_2$ on all vertices of $\Sigma_1$ and $\Sigma_2$ (the restriction to vertices of either graph map will be denoted $f$). 
\end{remark}

In Section \ref{subsec_NewtonRayEndpoints} we describe how to alter the endpoints and accesses of Newton ray grand orbits in $\Sigma_1$ so that they coincide with those of $\Sigma_2$ without changing the homotopy class of the graph map extension. Once this is done, a method is given in Section \ref{subsec_EquivNewtonRayGO} to determine whether the rays yield equivalent extensions across complementary components of $\Sigma_1^-$ and $\Sigma_2^-$ that contain non-degenerate Hubbard trees; accordingly, an equivalence is placed on ray grand orbits.  Finally the combinatorial equivalence of abstract extended Newton graphs is given in Section \ref{subsec_equivOnAENG} in terms of the equivalence on ray grand orbits.

\subsection{Making ray endpoints and accesses coincide}\label{subsec_NewtonRayEndpoints}

We present an initial alteration to the Newton ray grand orbits of $\Sigma_1$ and $\Sigma_2$ so that their endpoints and accesses to the Newton graph and Hubbard tree vertices coincide (it is possible and not infrequent for a repelling fixed point of a polynomial to be the landing point of multiple external rays, and we simply wish to fix a preferred external ray, corresponding to an access to the fixed point in the complement of the filled Julia set). These alterations are done so as to not change the homotopy classes of the graph map extensions, so after the alterations the different graph maps are easier to compare because they only differ in the homotopy classes of rays connecting Hubbard trees to the Newton graph with corresponding endpoints, and so can be distinguished using an integer condition. Even though the operation is performed on ray grand orbits, it is somewhat non-dynamical in nature because a periodic Newton ray may be replaced with a Newton ray that is not periodic.

\begin{lemma}\label{Lem:CompatibleRays}
Suppose that $\Sigma_1^-=\Sigma_2^-$ and that $f_1|_{\Sigma_1^-}=f_2|_{\Sigma_2^-}$. Let $H_{m}$ be a  Hubbard tree of period $m\geq 2$ in both $\Sigma_1$ and $\Sigma_2$, where $H_m$ is contained in the complementary component $U_m$ of $\Gamma$. Let $R_{1,m}\subset\Sigma_1$ be a Newton ray landing at $\omega_1\in H_{m}$ and $R_{2,m}\subset\Sigma_2$ a Newton ray landing at $\omega_2\in H_{m}$. Then there is a Newton ray $R_{2,m}'$ landing at $H_m$ whose grand orbit under the extension $\ol{f_2}$ has the same endpoints and accesses as the ray grand orbit of $R_{1,m}$ (while the homotopy class of $R'_{2,m}$ will usually be different from that of $R_{q,m}$).

\end{lemma}

\begin{proof}
Let $H_{m+1}$ be the Hubbard tree that contains $f(H_m)$, and let $V_{m}$ be the complementary component of $f(\Gamma)$ containing $H_{m+1}$. Note that upon restricting the domain,
\[\ol{f}_2:U_m\setminus \ol{f}_2^{-1}(H_{m+1})\to V_m\setminus H_{m+1}\]
 defines a covering map between two topological annuli. Under this map, the image of $R_{1,m}$ is a curve in $V_m$ that is not necessarily simple, but using annular coordinates this image curve is homotopic in $V_m\setminus H_{m+1}$ rel endpoints to a simple curve $\rho'_{2,m+1}$. Let $R'_{2,m}$ be the lift of $\rho'_{2,m+1}$ under $\ol{f}_2$ that is homotopic to $R_{1,m}$, having the same accesses and endpoints (such a lift exists by the homotopy lifting property rel the vertex set). Clearly $\ol{f}_2(R'_{2,m})$ is a simple arc in $V_m\setminus H_{m+1}$.  We have thus shown that $R'_{2,m}$ is a ray that has the correct endpoints and accesses. 
 
 Next we produce the ray grand orbit of $R'_{2,m}$ by a lifting procedure. Let $H_{m-1}$ be a Hubbard tree so that $f_2(H_{m-1})$ contains $H_m$, and let $R_{1,m-1}$ be the ray in the grand orbit of $R_{1,m}$ that lands at $H_{m-1}$. Define $U_{m-1}$ to be the complementary component of $\Gamma$ containing $H_{m-1}$, and let $V_{m-1}:=\ol{f}_2(U_{m-1})$. Let $z_m$ be the endpoint of $f_2(R_{1,m-1})$ that is in $\partial V_{m-1}$. Note that the portion of the path $f_2(R_{1,m-1})$ that is contained in $\Gamma$ defines a path connecting $z_m$ to the endpoint of $R'_{2,m}$ that lies in the Newton graph. The concatenation of this path with $R'_{2,m}$ yields a path in $\overline{V_{m-1}}$ that connects $H_m$ to $z_m$. There is a lift of this concatenation that has the same endpoints and accesses as $R_{1,m-1}$, and we denote this lift by $R'_{2,m-1}$.
 
Continuing this lifting procedure inductively produces the desired ray grand orbit.
\end{proof}

\begin{corollary}\label{cor:CompatibleRayOrbits}
Suppose that $\Sigma_1^-=\Sigma_2^-$ and that $f_1|_{\Sigma_1^-}=f_2|_{\Sigma_2^-}$. There is a graph $\Sigma_2'$ that is the domain of a continuous map $f_2'$ so that
\begin{itemize}
\item $\Sigma_2^-$ is a subgraph of $\Sigma_2'$ and $f_2|_{\Sigma_2^-}=f_2'|_{\Sigma_2^-}$,
\item the only edges of $\Sigma_2'$ that are not in $\Sigma_2^-$ are Newton ray edges, 
\item the endpoints and accesses of the Newton rays in $\Sigma_1$ coincide with those of the rays in $\Sigma_2'$, and
\item $f_2'$ is the restriction of $\ol{f}_2$ to $\Sigma_2'$. 
 \end{itemize}
\end{corollary}

\begin{proof}
Apply the preceding lemma to each Newton graph ray orbit in $\Sigma_1$.
\end{proof}

\begin{remark}
Having changed the Newton rays, a corresponding change is made to the auxiliary edges. The auxiliary edges associated with $R_{2,i}'$ are taken to be all the Newton rays in $\ol{f}_2^{-1}(\ol{f}_2(R_{2,i}))$ that are contained in $U_i$. It is easily seen that $\Sigma_2'$ in the Corollary satisfies all of the properties of an abstract extended Newton graph except that the Newton rays are not necessarily periodic or preperiodic.  The loss of periodicity is actually not significant since we only need to understand certain topological properties of the extension described next.
\end{remark}

\subsection{Equivalence on Newton ray grand orbits}\label{subsec_EquivNewtonRayGO}

We now wish to compare the extensions of graph maps over complementary components of $\Sigma_1^-$ and $\Sigma_2^-$ containing nondegenerate Hubbard trees in their closures.  The other complementary components may only be disks or once-punctured disks; these are not discussed here because there is a unique extension over such components up to isotopy given by the Alexander trick \cite[Chapter 2.2]{Fa}.  

Restricting attention to the complementary components of $\Gamma$ that contain the grand orbit of a single Hubbard tree, we will show in Lemma \ref{Lem:NumericsOfAnnulusExtensions} that whether or not two extensions are equivalent (in the sense of Definition \ref{defn:ThurstonEquivGraphExtensionAnnulus}) can be determined solely in terms of numerical properties of the Newton rays.  We can then define a combinatorial equivalence on Newton ray grand orbits so that two ray grand orbits are Thurston equivalent if and only if the extensions to the complementary components of $\Gamma$ intersecting the grand orbit are equivalent (see Lemma \ref{Lem_GlobalExtOfGraphEquiv}).

Let $H_1$ be a nondegenerate Hubbard tree in $\Sigma_1$ (and $\Sigma_2$) of preperiod $r\geq 0$ and period $m\geq2$, and let $H_i=f^{i-1}(H_1)$ for $1\leq i\leq r+m$.  Let $U_i$ be the complementary component of $\Gamma$ that contains $H_i$ and let $\mathscr{U}=\cup_i U_i$. Each $\ol{f_1}(U_i)$ is homeomorphic to a disk, and is a complementary component of $f(\Gamma)$ that contains both $H_{i+1}$ and $U_{i+1}$ as a proper subset. So strictly speaking, the restriction of $\ol{f_1}$ and $\ol{f_2}$ to $\mathscr{U}$ do not define a dynamical system on $\mathscr{U}$. To remedy this, fix a homeomorphism $\ol{f_1}(\mathscr{U})\to \mathscr{U}$ that restricts to the identity on each Hubbard tree. Postcomposing each of $\ol{f_1}$ and $\ol{f_2}$ by this homeomorphism produces two maps which we denote $\ol{f_1},\ol{f_2}:\mathscr{U}\to\mathscr{U}$ by a slight abuse of notation.

\begin{definition}[Thurston equivalent graph extensions over $\mathscr{U}$]\label{defn:ThurstonEquivGraphExtensionAnnulus}
We say that two extensions $\ol{f_1},\ol{f_2}:\mathscr{U}\to\mathscr{U}$ of the graph maps $f_1,f_2$  are \emph{Thurston equivalent over $\mathscr{U}$} if there are homeomorphisms $\phi_1, \phi_2:\ol{\mathscr{U}}\to\ol{\mathscr{U}}$ 
so that:
\[
    {\phi_1}\circ \ol{f_1} = \ol{f_2} \circ {\phi_2}
\]
and there exists a homotopy $\Phi: [0,1]\times\overline{\mathscr{U}}\to\overline{\mathscr{U}}$ with
$\Phi(0,\cdot)={\phi_1}$, $\Phi(1,\cdot)={\phi_2}$ so that for all $t\in [0,1]$, we have that $\Phi(t,\cdot)|_{\partial\mathscr{U}}\subset \partial\mathscr{U}$ and $\Phi(t,\cdot)$ restricts to the identity on graph vertices for all $t$.  If ${\phi_1}$ and ${\phi_2}$ are homotopic to the identity in the sense just mentioned, we say that the extensions are homotopic over $\mathscr{U}$.
\end{definition}

Each Hubbard tree $H_i$ is contained in a complementary component $U_i$ of $\Gamma$ so that $U_i\setminus H_i$ is a non-degenerate topological annulus. Let $T_i$ denote the right-hand Dehn twist about this annulus. The  $T_i$ operate on pairwise disjoint annuli because no two $H_i$ lie in the same complementary component of the Newton graph.  Thus any two such twists commute.

Let $R_{1,i}, R_{2,i}$ for $1\leq i\leq r+m$ denote the Newton ray edges connecting $H_i$ to $\Gamma$ in $\Sigma_1,\Sigma_2$ respectively.  By Corollary \ref{cor:CompatibleRayOrbits}, we may assume $R_{1,i}$ and $R_{2,i}$ have the same endpoints and accesses.  The equality symbol is used for arcs to indicate that they are homotopic rel endpoints on $\mathscr{U}$.  Note that for all $i$, there is a unique $\ell_i\in\Z$ and $\ell_i'\in\Z$ so that $T_i^{\ell_i}(R_{1,i})=R_{2,i}$ and $T_{i+1}^{\ell_i'}(\ol{f_1}(R_{1,i}))=\ol{f_2}(R_{2,i})$.


\begin{remark}
It is very likely that the numerical condition in the following lemma can be simplified. For example, in Proposition 2.2 of \cite{BEK}, the Thurston class of a polynomial mating postcomposed by a Dehn twist about the equator is expressed in terms of the same mating with one of the polynomials rotated. A similar sort of behavior is expected when applying a Dehn twist  about a filled Julia set for a Newton map. Though less conceptual, the following lemma can be proven quickly.
\end{remark}

\begin{lemma}[Numerics of equivalent extensions]\label{Lem:NumericsOfAnnulusExtensions}
The extensions $\ol{f_1}$ and $\ol{f_2}$ over $\mathscr{U}$ of the graph maps $f_1$ and $f_2$ are Thurston equivalent if and only if there are integers $n_1,...,n_{r+m-1}$ that satisfy the following system of linear equations:
\begin{equation}\label{eqn:numericsForEquivExtensionsPeriodic}
d_i(n_i-\ell_i)+\ell'_i=n_{i+1}
\end{equation}
where $1\leq i\leq r+m-1$ and $n_r=n_{r+m}$.

\end{lemma}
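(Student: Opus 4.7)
The plan is to parametrize both the Newton ray discrepancies and the conjugating homeomorphisms $\phi_1,\phi_2$ by integer Dehn twist powers, exploiting that the mapping class group of each annulus $U_i\setminus H_i$ rel its boundary is infinite cyclic, generated by $T_i$. For the forward direction, I would first isotope the given Thurston equivalence over $\mathscr{U}$ so that both $\phi_1$ and $\phi_2$ restrict to the identity on $\Sigma^- := \Sigma_1^- = \Sigma_2^-$. This is possible because the $\phi_j$ already fix all vertices, and the mapping class group of the graph $\Sigma^-$ rel vertices is trivial on each edge (an arc rel its endpoints has a unique isotopy class). With this normalization, we may write $\phi_1|_{U_i}\simeq T_i^{a_i}$ and $\phi_2|_{U_i}\simeq T_i^{b_i}$; the remaining homotopy between them rel vertices and $\Sigma^-$ then forces $a_i=b_i=:n_i$.

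Next I would translate the conjugacy equation $\phi_1\circ \ol{f_1} = \ol{f_2}\circ \phi_2$, evaluated on the distinguished ray $R_{1,i}\subset U_i$, into a relation between twist exponents. The LHS is homotopic rel endpoints in $U_{i+1}$ to $T_{i+1}^{n_{i+1}}(\ol{f_1}(R_{1,i}))$. For the RHS, I would apply the naturality $\ol{f_2}\circ T_i \simeq T_{i+1}^{d_i}\circ \ol{f_2}$, which holds because $\ol{f_2}$ restricts to an unbranched cover $U_i\setminus H_i \to U_{i+1}\setminus H_{i+1}$ of degree $d_i$, together with the defining identities $R_{1,i}=T_i^{-\ell_i}(R_{2,i})$ and $\ol{f_2}(R_{2,i})=T_{i+1}^{\ell_i'}(\ol{f_1}(R_{1,i}))$. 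A direct computation yields $\ol{f_2}(\phi_2(R_{1,i}))\simeq T_{i+1}^{d_i(n_i-\ell_i)+\ell_i'}(\ol{f_1}(R_{1,i}))$, and equating twist exponents delivers the recursion $n_{i+1}=d_i(n_i-\ell_i)+\ell_i'$. Propagating the recursion around the periodic cycle of Hubbard trees and demanding consistency of the twist parameter gives the closure condition $n_r=n_{r+m}$.

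The converse is a construction. Given integers $(n_i)$ solving the system, I would define $\phi_1=\phi_2$ to be the identity on $\Sigma^-$, the Dehn twist power $T_i^{n_i}$ on each $U_i$, and extended over the remaining complementary disks of $\Sigma^-$ via the Alexander trick. The recursion guarantees that $\phi_1\circ\ol{f_1}$ and $\ol{f_2}\circ\phi_2$ agree up to isotopy on every Newton ray, while equality on $\Gamma$ and on the Hubbard trees is automatic. The main obstacle will be careful tracking of twist conventions and indexing: verifying that the naturality relation is $\ol{f_2}\circ T_i \simeq T_{i+1}^{d_i}\circ \ol{f_2}$ rather than some reciprocal, that the substitutions of $\ell_i$ and $\ell_i'$ appear in the correct direction, and that the periodic closure condition is indexed to match the cycle of Hubbard trees. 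A subtler preliminary point is justifying the initial reduction of $\phi_1,\phi_2$ to fix $\Sigma^-$ pointwise, which requires a modest isotopy argument beyond the given assumption that the $\phi_j$ fix only vertices.
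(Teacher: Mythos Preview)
Your proposal is correct and follows essentially the same argument as the paper: both reduce the conjugating homeomorphisms to a single product of Dehn twists $S=\prod T_i^{n_i}$, apply $S\circ\ol{f_1}=\ol{f_2}\circ S$ to the distinguished ray $R_{1,i}$, and compute via the naturality relation $\ol{f_2}\circ T_i\simeq T_{i+1}^{d_i}\circ\ol{f_2}$ together with the definitions of $\ell_i,\ell_i'$ to obtain the recursion. Your write-up is in fact more careful than the paper's, which simply asserts the existence of such an $S$ without justifying the reduction of $\phi_1,\phi_2$ to identical Dehn-twist powers; your explanation via the annular mapping class groups and the homotopy constraint fills exactly that gap.
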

\begin{proof}

Suppose that the extensions $\ol{f_1}$ and $\ol{f_2}$ are Thurston equivalent.  Then there are $n_i\in\mathbb{Z}$ so that up to branched cover homotopy,
\begin{equation}\label{eqn:ThurstonEquivAlongAnnuli}
S\circ\overline{f_1}= \overline{f_2}\circ S
\end{equation}
where $S=T_1^{n_1}\circ ...\circ T_{m+r-1}^{n_{m+r-1}}$.  

Fix $i$ as in the statement of the lemma.  All of the Dehn twists $T_1,...,T_{m+r-1}$  fix $\overline{f_1}(R_{1,i})$ except possibly $T_{i+1}$, and thus the expression on the left side of Equation \ref{eqn:ThurstonEquivAlongAnnuli} acts on the ray $R_{1,i}$ as follows:
\begin{equation}\label{eqn:ThurstonEquivAlongAnnuliLHS}
S\circ\overline{f_1}(R_{1,i})=  T_{{i+1}}^{n_{i+1}}\circ \overline{f_1}(R_{1,i})
\end{equation}
and the right side acts on $R_{1,i}$ as follows:

\begin{align*}\label{eqn:ThurstonEquivAlongAnnuliRHS}
\overline{f_2}\circ S(R_{1,i}) &=  \overline{f_2}(T_i^{n_i}(R_{1,i})) \\ 
&=  \overline{f_2}(T_i^{n_i-\ell_i}(R_{2,i})) \\ 
&=  T_{i+1}^{d_i(n_i-\ell_i)}(\overline{f_2}(R_{2,i})) \\ 
 &=  T_{i+1}^{d_i(n_i-\ell_i)+\ell'_i}(\overline{f_1}(R_{1,i})).
\end{align*}
Equating the expression in the previous line and the right side of Equation \ref{eqn:ThurstonEquivAlongAnnuliLHS} we obtain Equation \ref{eqn:numericsForEquivExtensionsPeriodic}.

If on the other hand Equation \ref{eqn:numericsForEquivExtensionsPeriodic} has integer solutions, it follows that $S\circ\overline{f_1}$ and $\overline{f_2}\circ S$ are homotopic over $\mathscr{U}$ using a close analog of Lemma \ref{Lem_IsotopGraphMaps}.  Thus the extensions $\ol{f_1}$ and $\ol{f_2}$ are Thurston equivalent over $\mathscr{U}$.
\end{proof}

\begin{remark}
Since the grand orbit of a Hubbard tree can be written as the union of the forward orbit of finitely many Hubbard trees, Definition \ref{defn:ThurstonEquivGraphExtensionAnnulus} extends to the case of complementary components of the Newton graph containing the grand orbit of a Hubbard tree.  Similar numerics as in the previous lemma hold for this slightly more general case.
\end{remark}

\begin{definition}[Newton ray grand orbit equivalence]\label{defn:AENGequivalence}  
\index{grand orbit equivalence!of Newton ray}
We say that the grand orbit of the Newton ray $R_{1,i}\subset\Sigma_1$ landing at $H_i$ is equivalent to the grand orbit of the ray $R_{2,i}\subset\Sigma_2$ landing at $H_{i}$ if after applying Corollary \ref{cor:CompatibleRayOrbits} to guarantee that $R_{1,i}$ and $R_{2,i}$ have the same endpoints and accesses, the numerical condition of Lemma \ref{Lem:NumericsOfAnnulusExtensions} is satisfied.   
\end{definition}

\begin{lemma} \label{Lem_GlobalExtOfGraphEquiv} 
Let $(\Sigma_1,f_1)$ and $(\Sigma_2,f_2)$ be two abstract extended Newton graphs  with $\Sigma_1^-=\Sigma_2^-$ and  $f_1=f_2$. Then if the corresponding Newton ray orbits are equivalent under Definition \ref{defn:AENGequivalence}, the extensions of $f_1$ and $f_2$ to the sphere are Thurston equivalent as marked covers.
\end{lemma}

\begin{proof}
Applying Lemma \ref{cor:CompatibleRayOrbits} to $f_2:\Sigma_2\to\Sigma_2$, we may assume that the Newton ray grand orbit of $f_2$ has the same endpoints and accesses as $\Sigma_1$.   Let  $\mathscr{U}$ be the union of the complementary components of $\Sigma_2^-$ that contain a non-degenerate Hubbard tree in the closure. Then define the homeomorphisms $\phi_1,\phi_2:\S^2\to\S^2$  to be the identity on $\S^2\setminus \mathscr{U}$ and on the components of $\mathscr{U}$ (which are necessarily annuli), the maps are defined to be the self-homeomorphism of $\mathscr{U}$ from Lemma \ref{Lem:NumericsOfAnnulusExtensions}. Then Lemma \ref{Lem:NumericsOfAnnulusExtensions} and Lemma \ref{Lem_IsotopGraphMaps} imply that $f_1$ and $f_2$ are Thurston equivalent as marked covers.
\end{proof}

\subsection{Equivalence on abstract extended Newton graphs}\label{subsec_equivOnAENG}

We now define the combinatorial equivalence relation on abstract extended Newton graphs that is used in the classification theorem (Theorem \ref{Thm_Bijection}) and prove an important result connecting this equivalence with Thurston equivalence. Note that the simplifying assumptions of Remark \ref{rem:raySimplifying} are in effect for Section \ref{subsec_equivOnAENG}.

\begin{lemma}[Extension across Newton rays] \label{Lemma_ThurstonEquivalenceAbstExtGraphsPrep} 
Let $(\Sigma_1,f_1)$ and $(\Sigma_2,f_2)$ be abstract extended Newton graphs. Let $\phi_1^-, \, \phi_2^-\colon \Sigma_1^-  \to \Sigma_2^- $ be graph homeomorphisms that preserve the cyclic
order of edges at all the vertices of $\Sigma_1^-, \, \Sigma_2^-$, and satisfy the equation $\phi_1^-\circ f_1=f_2\circ \phi_2^-$ on $\Sigma_1^-$. Then if the accesses/endpoints of each Newton ray in $\Sigma_1$  correspond under $\phi_1^-$ and $\phi_2^-$ to the accesses/endpoints of a Newton ray in $\Sigma_2$, then $\phi_1^-,\phi_2^-$ extend to graph homeomorphisms $\phi_1,\phi_2:\Sigma_1\to\Sigma_2$ that preserve the cyclic order of edges at each vertex, with $\phi_1\circ f_1=f_2\circ \phi_2$ on $\Sigma_1$.
\end{lemma}

\begin{proof}
For a given Newton ray $R$ in $\Sigma$, the graph maps $\phi_1$ and $\phi_2$ are already defined at the endpoints $i(R)$ and $t(R)$ of $R$. Thus the image of the single edge $R$ under $\phi_1$ must be taken to be the unique Newton ray in $\Sigma_2$ connecting $\phi_1(t(R))$ and $\phi_1(i(R))$, and likewise for $\phi_2$. The extension of the conjugacy across $R$ is accomplished by pullback.
\end{proof}

\begin{definition} [Equivalence relation for abstract extended Newton graphs] \label{Def_ThurstonEquivalenceAbstExtGraphs}
\index{equivalent!abstract extended Newton graphs}\index{abstract extended Newton graph!equivalence} 
Let $(\Sigma_1,f_1)$ and $(\Sigma_2,f_2)$ be abstract extended Newton graphs. We say that $(\Sigma_1,f_1)$ and $(\Sigma_2,f_2)$ are
\emph{equivalent} if and only if
\begin{itemize}
\item there exist two homeomorphisms
$\phi_1^-, \, \phi_2^-\colon \Sigma_1^-  \to \Sigma_2^- $ that are graph maps and preserve the cyclic
order of edges at all the vertices of $\Sigma_1^-, \, \Sigma_2^-$,
\item the equation $\phi_1^-\circ f_1=f_2\circ \phi_2^-$ holds on $\Sigma_1^-$, and  
\item upon applying Corollary \ref{cor:CompatibleRayOrbits} and Lemma \ref{Lemma_ThurstonEquivalenceAbstExtGraphsPrep} to produce the extensions $\phi_1,\phi_2:\Sigma_1\to\Sigma_2$,   each ray grand orbit of $f_2$ is equivalent (see Definition \ref{defn:AENGequivalence}) to a ray grand orbit of $\phi_1\circ f_1\circ \phi_2^{-1}$ and vice versa.
\end{itemize}
\end{definition}

We now show that two abstract extended Newton graphs are combinatorially equivalent if and only if their extensions are Thurston equivalent.

\begin{theorem}[Combinatorial formulation of Thurston equivalence]\label{Thm_CombinatorialFormOfThurstonEquiv}
\index{Thurston equivalent!combinatorial formulation}
Let $(\Sigma_1,f_1)$ and $(\Sigma_2,f_2)$ be abstract extended Newton graphs with graph map extensions $\ol{f_1},\ol{f_2}:\S^2\to\S^2$ respectively.  Then $(\Sigma_1,f_1)$ and $(\Sigma_2,f_2)$ are equivalent in the sense of Definition \ref{Def_ThurstonEquivalenceAbstExtGraphs} if and only if $(\ol{f_1},\Sigma_1')$ and $(\ol{f_2},\Sigma_2')$ are Thurston equivalent as marked branched covers.
\end{theorem}

\begin{proof}

First assume that the two abstract extended Newton graphs are equivalent.  By definition of extended Newton graph equivalence, there are graph homeomorphisms $\phi_1, \, \phi_2:\Sigma_1 \to \Sigma_2$ that satisfy the conditions in Definition \ref{Def_ThurstonEquivalenceAbstExtGraphs}. Since the complementary components of $\Sigma_1$ and $\Sigma_2$ are all disks, $\phi_1,\phi_2$ can be extended to global homeomorphisms $\bar{\phi_1},\bar{\phi_2}$. Since the Newton ray grand orbits are equivalent in the sense of Definition \ref{defn:AENGequivalence}, Lemma \ref{Lem_GlobalExtOfGraphEquiv} implies that there must be homeomorphisms $S_1$ and $S_2$ of the sphere which are both products of Dehn twists about the non-degenerate Hubbard trees as in Lemma \ref{Lem:NumericsOfAnnulusExtensions}  so that
\begin{equation}\label{Eqn_MultitwistCommuteWithNewton}
S_1\circ\bar{\phi}_1\circ{f_1}=f_2\circ\bar{\phi}_2\circ S_2
\end{equation}
where $S_1$ is homotopic to $S_2$ relative to the vertices of $\Sigma_1$.  The maps on both sides of Equation \ref{Eqn_MultitwistCommuteWithNewton} are both regular extensions of a graph map (see Proposition \ref{Prop_RegExt}) and they also satisfy the hypotheses of Lemma \ref{Lem_IsotopGraphMaps}. Thus $f_1$ and $f_2$ are Thurston equivalent as marked branched covers.

Now suppose that $(\overline{f}_1,\Sigma'_1)$ and $(\overline{f}_2,\Sigma'_2)$ are Thurston equivalent as marked branched covers.  Take $g_0,g_1:(\S^2, \Sigma'_1)\to (\S^2,\Sigma'_2)$ to be the maps from the definition of Thurston equivalence where $g_0\circ\ol{f_1}=\ol{f_2}\circ g_1$.  Let $e$ be an edge of $\Sigma_1^-$ with endpoints $\partial e$.  Then $g_1(e)$ connects the two points in $g_1(\partial e)$.
Moreover, $g_1$ preserves the cyclic order at each vertex of
$\Sigma_1^-$, because it is an orientation-preserving homeomorphism of $\S^2$. Let
$g':(\S^2,\Sigma'_2)\to(\S^2,\Sigma'_2)$ be a homeomorphism that for all edges $e$  not Newton rays maps each $g_1(e)$
to an edge of $\Sigma_2$ that connects the two points in $g_1(\partial e)$. 

Then $g'\circ g_1$ realizes an equivalence between the two abstract extended Newton graphs (let $\phi_0=\phi_1=g'\circ g_1$ in Definition \ref{Def_ThurstonEquivalenceAbstExtGraphs}), except that the Newton rays must still be shown to be equivalent.  Apply Corollary \ref{cor:CompatibleRayOrbits} so that all Newton rays have corresponding endpoints under $\phi_0,\phi_1$.  Then since $\ol{f_1}$ and $\ol{f_2}$ are Thurston equivalent as branched covers, Lemma \ref{Lem:NumericsOfAnnulusExtensions} implies the rays are equivalent.  Thus $(\Sigma_1,f_1)$ and $(\Sigma_2,f_2)$ are combinatorially equivalent.
\end{proof}


\section{Newton maps from abstract extended Newton graphs}\label{NewtonMapsfromGraphs}

We now prove that all abstract extended Newton graphs are realized by Newton maps.

\begin{proof}[Proof of Theorem \ref{Thm_Realization}]
It suffices to show that the marked branched cover $(\ol{f},\Sigma')$ is unobstructed, where $\Sigma'$ denotes the set of vertices of $\Sigma$.  The conclusion will follow by Head's theorem, where the holomorphic fixed point theorem is used to argue that the point at infinity is repelling \cite{Milnorbook}.

Suppose to the contrary that $\Pi$ is a multicurve obstruction for $(\ol{f},\Sigma')$, and without loss of generality assume $\Pi$ is irreducible.  Recall from Condition \ref{Cond_NewtonGrah} that $\Sigma$ contains an abstract Newton graph $\Gamma$ which in turn contains an abstract channel diagram $\Delta$.   The following lemma restricts where obstructions may exist, using Theorem \ref{Thm_ArcsInterOb}.

\begin{lemma}\label{Lemma:ObstructionsIntersectingEdges}
If $\Pi$ is a multicurve obstruction\index{multicurve obstruction}\index{obstruction!multicurve} for $(\ol{f},\Sigma')$, then $$\Pi\cdot(\Gamma\setminus\Delta)=0.$$
\end{lemma}

\begin{proof}
Suppose first that there exists an edge $\lambda$ in $\Delta$ so that $\lambda\cdot\Pi\neq 0$.  Since $\{\lambda\}$ itself forms an irreducible arc system, the second case of Theorem \ref{Thm_ArcsInterOb} implies that  $\Pi$ intersects no other preimage of $\lambda$ except for $\lambda$ itself.  

If on the other hand, $\lambda\cdot\Pi = 0$, the first case of Theorem \ref{Thm_ArcsInterOb} implies that no preimage of $\lambda$ intersects $\Pi$.  Since every edge in $\Gamma$ is a lift of an edge in $\Delta$, the conclusion follows.
\end{proof}

The proof of the theorem is now completed by showing that whether or not $\Pi$ has intersection with $\Delta$ in minimal position, a contradiction results.

\subsection{Contradiction for the case $\Pi\cdot\Delta\neq 0$}

Let $\gamma_1$ be any curve in $\Pi$ so that $\gamma_1\cdot\Delta\neq 0$.  Recall from Definition \ref{Def_NewtonGraph} that $\overline{\Gamma\setminus\Delta}$ is connected. It is a consequence of Lemma \ref{Lemma:ObstructionsIntersectingEdges} that $\overline{\Gamma\setminus\Delta}$ does not intersect $\Pi$; thus $\overline{\Gamma\setminus\Delta}$ must be a subset of one of the complementary components of $\gamma_1$.  Denote the complementary component of $\gamma_1$ that does not contain $\overline{\Gamma\setminus\Delta}$ by $D(\gamma_1)$.  None of the vertices of $\Gamma$ except possibly $v_\infty$ lie in $D(\gamma_1)$.  However, there must be at least two vertices of $\Sigma$ in $D(\gamma_1)$ for otherwise $\gamma_1$ would not be essential.  The only vertices of $\Sigma$ which could possibly be in $D(\gamma_1)$ are $v_\infty$ and Hubbard tree vertices. Due to the connectedness of $\Sigma$, at least one of the following must hold: $\gamma_1\cdot H_1\neq 0$ for some Hubbard tree $H_1$ or $\gamma_1\cdot R_1\neq 0$ for some Newton ray $R_1$. We only prove the Hubbard tree case, noting that the Newton ray case is identical.


\begin{figure}[h]
\centerline{\includegraphics[width=100mm]{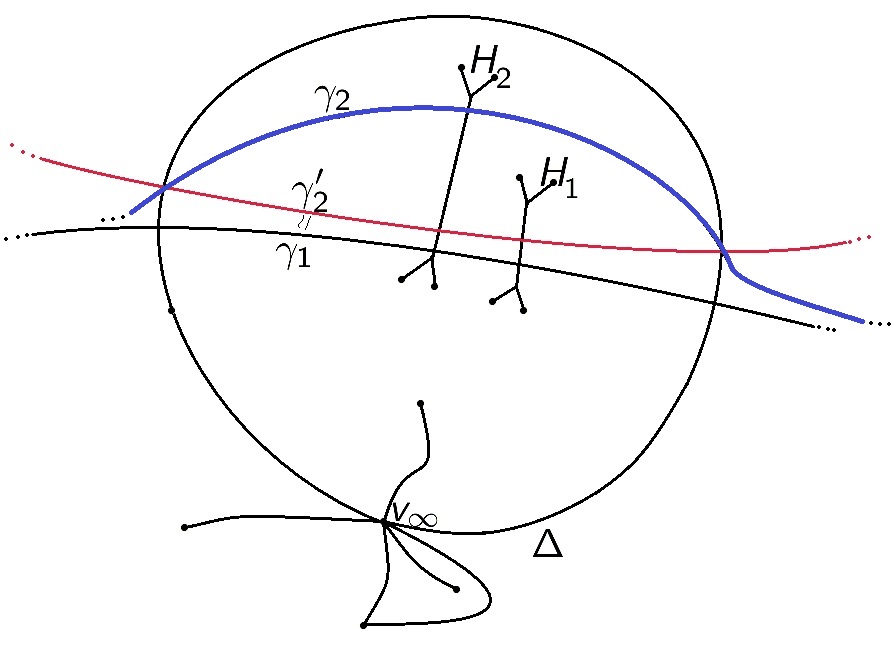}}
\caption{Illustration of the first case $\Pi\cdot\Delta\neq 0$} 
\label{Fig_Deg4SmallBasilicas}
\end{figure}

First suppose $\gamma_1\cdot H_1\neq 0$. Let $\gamma_2\in\Pi$ be some curve whose preimage under $\ol{f}$ has a component $\gamma_2'$ which is homotopic to $\gamma_1$ rel vertices ($\gamma_2$ exists by irreducibility).  Clearly $\gamma_2'$ must intersect $H_1$ and $\Delta$.  Let $\lambda_1$ be some component of $\gamma_2'\cap(S^2\setminus\Delta)$ that intersects $H_1$.  Recall that $H_1$ must either be non periodic or have period at least two, so in either case $H_2:=\ol{f}(H_1)$ does not intersect $H_1$.  Also $H_1$ and $H_2$ must be in the same complementary component of $\Delta$ because $\lambda_1$ connects $H_1$ to $\Delta$ (without passing through any other edges of $\Gamma$ due to Lemma \ref{Lemma:ObstructionsIntersectingEdges}), and $\ol{f}$ is an orientation-preserving map that fixes each edge of $\Delta$.

Now we show that $H_1$ and $H_2$ can be connected by some path that avoids $\Gamma$ except at its endpoints. Since all critical points of $\ol{f}$ are contained in $\Sigma$ (by Condition \ref{Cond_DegreeExtGraph} of Definition \ref{Def_AbstractExtNewtGraph}) the Riemann--Hurwitz formula implies that $\ol{f}^{-1}(\Delta)\subset \Gamma$.  The edges of $\Delta$  are invariant under $\ol{f}$, so the endpoints of $\lambda_2:=\ol{f}(\lambda_1)$ are in the same edges as the endpoints of $\lambda_1$.  Since $\ol{f}^{-1}(\Delta)\subset\Gamma$, we have that $\lambda_2$ intersects $\Delta$ only at its endpoints. Starting at an intersection of $H_1$ and $\lambda_1$, traverse $\lambda_1$ until right before the intersection with the edge of $\Delta$.  Traverse a path in a small neighborhood of this edge until $\lambda_2$ is reached without intersecting any edges of $\Gamma$.  Traverse $\lambda_2$ until $H_2$ is reached.  This completes the construction of a path $\lambda_{1,2}$ from $H_1$ to $H_2$ that does not intersect $\Delta$. Moreover, Lemma \ref{Lemma:ObstructionsIntersectingEdges} implies that $\lambda_1$ and $\lambda_2$ do not intersect $\Gamma\setminus\Delta$ avoiding $\Gamma$ and so $\lambda_{1,2}$ does not intersect $\Gamma$. This contradicts the assumption that $H_1$ and $H_2$ are separated by the Newton graph (Condition~\ref{Cond_TreesSeparated}).

\subsection{Contradiction for the case $\Pi\cdot\Delta= 0$}

Using Lemma \ref{Lemma:ObstructionsIntersectingEdges} we see that $\Pi\cdot\Gamma=0$.  Recall the assumption that every complementary component of $\Gamma$ contains at most one abstract extended Hubbard tree (Condition \ref{Cond_TreesSeparated}).  

Suppose that $U$ is such a complementary component containing some $\gamma\in\Pi$.  The only postcritical points that could possibly be contained in $U$ are vertices of Hubbard trees, so $U$ contains one Hubbard tree or one Hubbard tree preimage.  Since $\Pi$ is irreducible, the Hubbard tree must in fact be periodic and since $\gamma$ is essential the Hubbard tree is non-degenerate. Thus $U$ contains exactly one non-degenerate periodic abstract Hubbard tree $H$ of some period $m$.  Define $F:=\ol{f}^m$, and note that $\Pi$ is also a multicurve obstruction for $F$. Extract an irreducible multicurve obstruction for $F$ from $\Pi$, which we again denote by $\Pi$, and assume that $U$ still contains some component of $\Pi$.  

We show that the two Thurston linear maps $F_{\Pi}$ and $(F|_U)_{\Pi}$ are equal. In fact, we show $\Pi\subset U$.  Suppose that $W$ is a complementary component of $\Gamma$ different from $U$, and $\gamma'\subset W$ for some $\gamma'\in\Pi$.  By the irreducibility of $\Pi$, there is some $n>0$ and a component $\gamma''$ of $F^{-n}(\gamma')$ that is homotopic to $\gamma$ rel vertices.  Note that $\gamma''\subset U$ and that its complementary component that is a subset of $U$ contains some vertices of $\Sigma$ which must in fact be vertices of $H$. Since $\gamma''$ is homotopic to a subset of each arbitrarily small neighborhood of $H$, we obtain a contradiction since $F^n(\gamma'')\subset W$ but $F^n(H)=H\subset U$. Thus the two Thurston linear maps $F_{\Pi}$ and $(F|_U)_{\Pi}$ are equal. 

This contradicts the realizability (or unobstructedness) of the abstract Hubbard tree $H$ \cite[Theorem II.4.7]{Poirier}, and thus no such obstruction $\Pi$ exists, completing the proof.

\end{proof}


\section{Proof of the classification theorem} \label{Sec_ProofThmBijection}

Theorem 1.2 of \cite{LMS1} asserts that every postcritically finite Newton map has an extended Newton graph that satisfies the axioms of Definition \ref{Def_AbstractExtNewtGraph}, and we have shown in Section \ref{NewtonMapsfromGraphs} that every abstract extended Newton graph extends to an unobstructed branched cover, and is therefore realized by a Newton map.  We now check that these two assignments are well-defined on equivalence relations and are inverses of each other, giving an explicit classification of postcritically finite Newton maps in terms of combinatorics.

Recall that $\Newt$ is the set of postcritically finite Newton maps up to affine conjugacy, and that $\AENG$ is the set of abstract extended Newton graphs up to Thurston equivalence (Definition \ref{Def_ThurstonEquivalenceAbstExtGraphs}).  Equivalence classes in both cases are denoted by square brackets.  Our first goal is to show that the assignments made in Theorems \ref{Thm_NewtMapsGenerateExtNewtGraphs} and \ref{Thm_Realization} are well-defined on the level of equivalence classes, namely, they induce mappings $\mathcal{F}: \Newt \to \AENG$ and $\mathcal{F}': \AENG \to \Newt$.

We now argue that $\mathcal{F}$ is well-defined.  The construction from \cite{LMS1} of the extended Newton graph for a fixed Newton map involved no choices in the construction of type H and N edges, and possibly many choices in the construction of type R edges.  Let $(\Delta_{\N,1}^*,N_p)$ and $(\Delta_{\N,2}^*,N_p)$ be two extended Newton graphs constructed for $N_p$.  Proposition 6.4 in \cite{LMS1} asserts that $\Delta_{\N,1}^-=\Delta_{\N,2}^-$ and $N_p|_{\Delta_{\N,1}^-}=N_p|_{\Delta_{\N,2}^-}$ (recall that $\Delta_{\N,1}^-$ denotes the graph $\Delta_{\N,1}$ with all Newton ray edges removed).  We thus only need to show that the Newton ray grand orbits are equivalent.  The branched cover $(N_p,(\Delta_{\N,1}^*)')$ is identical as a branched cover to $(N_p,(\Delta_{\N,2}^*)')$ and they are both extensions of graph maps $N_p|_{\Delta_{\N,1}^*}$ and $N_p|_{\Delta_{\N,2}^*}$ respectively.  Theorem \ref{Thm_CombinatorialFormOfThurstonEquiv} then implies equivalence for corresponding ray grand orbits.

Well-definedness of $\mathcal{F}'$ is immediate from the fact that equivalent graphs have Thurston equivalent extensions (Theorem \ref{Thm_CombinatorialFormOfThurstonEquiv}) which correspond to affine conjugate Newton maps by Thurston rigidity (Theorem \ref{Thm_Thurston'sTheorem}).

\begin{proof}[Proof of Theorem \ref{Thm_Bijection}]
We first show injectivity of $\mathcal{F}: \Newt \to \AENG$.
Let $N_{p_1}$ and $N_{p_2}$ be two postcritically finite Newton maps that have equivalent extended Newton graphs $\Delta^*_{\N,1}$ and $\Delta^*_{\N,2}$.  Theorem 
\ref{Thm_NewtMapsGenerateExtNewtGraphs} asserts that each of these graphs satisfies the axioms of an abstract extended Newton graph, and since both graphs are equivalent, the marked branched covers $(N_{p_1},(\Delta^*_{\N,1})')$ and $(N_{p_2},(\Delta^*_{\N,2})')$ are equivalent by Theorem \ref{Thm_CombinatorialFormOfThurstonEquiv}.   We may then conclude that $N_{p_1}$
and $N_{p_2}$ are affine conjugate using Thurston rigidity.

Next we show injectivity of $\mathcal{F}': \AENG \to \Newt$.  Suppose that a postcritically finite Newton map $N_p$ realizes two abstract extended Newton graphs $(\Sigma_1,f_1)$ and $(\Sigma_2,f_2)$.  By minimality of the extended Hubbard trees and the Newton graph, we know that $\Sigma_1'=\Sigma_2'$.  Then the marked branched covers $(N_p,\Sigma_1')$ and $(N_p,\Sigma_2')$ are Thurston equivalent.  By Theorem \ref{Thm_CombinatorialFormOfThurstonEquiv} we conclude that $(\Sigma_1,f_1)$ and $(\Sigma_2,f_2)$ are combinatorially equivalent.

Finally we prove that $\mathcal{F}$ and $\mathcal{F}'$ are bijective and inverses of each other. Let $(\Sigma,f) \in \AENG$ be an abstract extended Newton graph. It follows from Theorem \ref{Thm_Realization} that $(\Sigma,f)$ is realized by a postcritically finite Newton map $N_p$. Thus 
\[
\mathcal{F}'([(\Sigma,f)]) = [N_p].
\]
Denote by $\Delta^*_\N$ an extended Newton graph associated with $N_p$ which is guaranteed by Theorem \ref{Thm_NewtMapsGenerateExtNewtGraphs} so that
\[
\mathcal{F}([N_p]) =\left[\left(\Delta^*_\N, N_p\right) \right].
\]
The injectivity statement just proved implies that under the equivalence of Definition \ref{Def_ThurstonEquivalenceAbstExtGraphs}, 
\[
[(\Sigma,f)] = \left[\left(\Delta^*_\N,N_p\right) \right].
\]
Thus $\mathcal{F} \circ \mathcal{F}'$ is the identity, and consequently the mapping $\mathcal{F}: \Newt \to \AENG$ is bijective and $\mathcal{F}' \circ \mathcal{F}$ is the identity. 
\end{proof}

This completes the combinatorial classification of postcritically finite Newton maps.

\bibliography{Lodge}
\bibliographystyle{alpha}

\printindex

\end{document}